\documentclass[11pt,reqno]{amsart}
\usepackage{indentfirst,enumerate,cite,amssymb,amsfonts,amsmath,amsthm,mathrsfs,dsfont}
\usepackage{color}
\usepackage{multicol}
\usepackage[colorlinks=true,linkcolor=blue,citecolor=blue]{hyperref}
\usepackage{enumerate}
\usepackage{geometry}
\numberwithin{equation}{section}

\newtheorem{theorem}{Theorem}[section]

\newtheorem{lemma}[theorem]{Lemma}
\newtheorem{proposition}[theorem]{Proposition}

\newlength{\dhatheight}

\begin{document}
	
	\title[Solvability of a class of evolution operators]{Solvability of a class of evolution operators on compact Lie groups}
	
	%----------Author 1
	\author[Kirilov]{Alexandre Kirilov}
	\address{
		Departamento de Matem\'atica 
		Universidade Federal do Paran\'a  
		CP 19096, CEP 81530-090, Curitiba 
		Brasil}
	\email{akirilov@ufpr.br}
	
	\thanks{This study was financed in part by the Coordena\c c\~ao de Aperfei\c coamento de Pessoal de N\'ivel Superior - Brasil (CAPES) - Finance Code 001. The first and second authors were supported in part by CNPq - Brasil (grants 316850/2021-7, 301573/2025-5 and 420814/2025-6).}
	
	%----------Author 2
	\author[de Moraes]{Wagner A. A. de Moraes}
	\address{
		Departamento de Matem\'atica  
		Universidade Federal do Paran\'a  
		Caixa Postal 19096\\  CEP 81530-090, Curitiba, Paran\'a 
		Brasil}
	\email{wagnermoraes@ufpr.br}
	
	%----------Author 3
	\author[Tokoro]{Pedro M. Tokoro}
	\address{
		Programa de P\'os-Gradua\c c\~ao em Matem\'atica  
		Universidade Federal do Paran\'a  
		Caixa Postal 19096\\  CEP 81530-090, Curitiba, Paran\'a  
		Brasil}
	\email{pedro.tokoro@ufpr.br}

	%----------classification, keywords, date
	\subjclass[2020]{Primary 35A01, 35B10; Secondary 35F15, 43A75, 22E30}

	\keywords{Vekua-type operators, Solvability, Fourier series, Diophantine conditions, Compact Lie groups}
	
	%\date{today}
	%%% ----------------------------------------------------------------------
	
\begin{abstract}
	This paper provides sufficient conditions for the solvability of a class of first-order evolution operators of Vekua-type on the product of a one-dimensional torus and a compact Lie group. The conditions are expressed in terms of the time-dependent coefficients and the spectral behavior of a normalized left-invariant vector field on the group. The three-sphere case is discussed in detail, leading to more explicit criteria, and the main results are further extended to operators defined on finite products of compact Lie groups.
\end{abstract}
	
	%%% ----------------------------------------------------------------------
	\maketitle
	%%% ----------------------------------------------------------------------
	%\tableofcontents

%==================================================%==================================================	
\section{Introduction}
%==================================================%==================================================	

Let $\mathbb{T}\simeq\mathbb{R}/2\pi\mathbb{Z}$ be the one-dimensional torus and let $G$ be a compact Lie group. This paper is concerned with global solvability on $\mathbb{T}\times G$ for a class of first-order evolution operators of the form
\begin{equation}\label{eq:operatorP}
	Pu=\partial_tu-C(t)Xu-A(t)u-B(t)\overline{u},
\end{equation}
where $A,B,C\in C^\infty(\mathbb{T})$ and $X$ is a normalized left-invariant vector field on $G$. 

The operator $P$ in \eqref{eq:operatorP} belongs to the class of Vekua-type equations, introduced by I. N. Vekua \cite{vekua} in his foundational work on generalized analytic functions. From a conceptual viewpoint, these equations extend the Cauchy--Riemann system by allowing a coupling between $u$ and its complex conjugate, a mechanism that both enriches the analytic structure and is unavoidable in several applications. In the classical literature this framework appears, for instance, in models from the theory of elastic shells and in the study of infinitesimal bendings of surfaces, where conjugation-type terms naturally encode geometric constraints. In a different direction, Vekua-type operators and their modern variants provide an effective language for representing and factorizing solutions of fundamental equations of mathematical physics: in particular, as developed by Kravchenko and collaborators (see \cite{Kravchenko2009} and references therein), suitable Vekua formulations lead to explicit integral representations and reduction procedures for equations such as the Schr\"odinger and Dirac equations. Moreover, the subsequent contributions of Bers \cite{Bers} and many others established a robust theory of generalized analytic functions and pseudoanalytic formalism, with further applications in hydrodynamics and elasticity theory.

Our goal is to identify explicit, verifiable conditions on the coefficients in \eqref{eq:operatorP} that guarantee the following notion of solvability: for every $f\in C^\infty(\mathbb{T}\times G)$ there exists a solution $u\in C^\infty(\mathbb{T}\times G)$ of $Pu=f$. The key difficulty lies in the interaction between the time-dependent coefficients and the representation-theoretic spectrum of $X$, together with the small-divisor phenomena created by the periodicity in $t$ and the presence of the conjugation term.

The present work extends, in two directions, previous solvability results for Vekua-type equations on tori. On $\mathbb{T}$ and $\mathbb{T}^n$, solvability can be studied by partial Fourier series in the space variables, reducing the PDE to families of $2\times2$ ODE systems whose periodic boundary conditions lead to arithmetic constraints (see \cite{BDM,AD,KMT24indag,KMT25ZAA,wagner}). We show that an analogous strategy remains effective on a general compact Lie group $G$, provided one replaces the classical Fourier modes by matrix-valued Fourier coefficients indexed by irreducible unitary representations. This allows us to incorporate the symbol of a left-invariant vector field $X$ into the analysis and to obtain conditions that are stable under the passage from scalar to matrix Fourier coefficients.

Following the approach in \cite{BDM}, we consider operators of the form
\[
Pu=\partial_tu-(p_0+i\lambda q(t))Xu-(s(t)+i\delta q(t))u-\alpha q(t)\overline{u},
\]
where $p_0,\lambda,\delta\in\mathbb{R}$, $\alpha\in\mathbb{C}\setminus\{0\}$, and $q,s\in C^\infty(\mathbb{T};\mathbb{R})$ with $q\ge 0$ and $q\not\equiv 0$. 

Our main theorem gives sufficient conditions for $C^\infty$-solvability in terms of: (i) a non-degeneracy requirement ensuring that the diagonalization of the associated $2\times2$ systems is well-defined; (ii) a global nonresonance condition excluding exact periodic obstructions; and (iii) a quantitative lower bound of Diophantine type controlling the denominators produced by the periodic boundary condition. These conditions depend on the averaged coefficients (integrals over one period) and on the spectral data $\{\mu_m(\xi)\}$ of the symbol $\sigma_X(\xi)$ in each irreducible representation $[\xi]\in\widehat{G}$.

We also treat separately the special case $\lambda=0$, where the relevant spectral parameter becomes independent of the representation and one obtains a simpler set of criteria. As an application, we specialize the general results to $G=\mathbb{S}^3\simeq\mathrm{SU}(2)$, where the Fourier theory is particularly concrete and the spectrum of $X$ can be described explicitly; this yields a transparent formulation of the solvability conditions in terms of the half-integer parameter $\ell$ and the weights $m\in\{-\ell,\dots,\ell\}$. Finally, we extend the solvability theory to the case where $G$ is a finite product of compact Lie groups and the drift term involves a sum of left-invariant vector fields acting on different factors, recovering as a corollary a compact-group analogue of earlier torus results.

Finally, note that when $\alpha=0$ the conjugation term vanishes and the equation is no longer of Vekua type; global solvability and hypoellipticity on compact Lie groups in this setting are treated in \cite{KMR20b,KPM21_jde,KKM24_mana}.

The paper is organized as follows. In Section~\ref{sec:Overview} we recall the basic facts on Fourier analysis and symbols on compact Lie groups and present the partial Fourier series framework on $\mathbb{T}\times G$, including the characterization of smooth functions and distributions in terms of decay/growth of their partial Fourier coefficients. Section~\ref{sec:normalform} performs a reduction to a convenient normal form for the drift term. Section~\ref{sec:solvability} contains the main solvability theorem and its proof. In Section~\ref{sec:lambda0} we discuss the case $\lambda=0$ and provide simplified criteria. Section~\ref{sec:example} treats the example $G=\mathbb{S}^3$, and Section~\ref{sec:product} extends the results to products of compact Lie groups.

	%==================================================
	%==================================================
	\section{Overview of harmonic analysis and symbols on compact Lie groups}\label{sec:Overview}
	%==================================================
	%==================================================
	
	In this section we collect the basic facts from harmonic analysis on compact Lie groups and the global symbolic calculus that will be used throughout the paper. We also introduce the partial Fourier series on $\mathbb{T}\times G$ and recall the characterization of smooth functions and distributions in terms of their partial Fourier coefficients. Background material can be found, for instance, in \cite{livropseudo,KMR20}.
	
	%==================================================
	\subsection{Fourier analysis on compact Lie groups}\label{subsec:fourier_on_G} \
	%==================================================
	
	Let $G$ be a compact Lie group of dimension $d=\dim G$, endowed with the normalized Haar measure $\mathrm{d}x$.
	We denote by $\widehat{G}$ the unitary dual of $G$, i.e., the set of equivalence classes $[\xi]$ of continuous, irreducible, unitary representations $\xi:G\to \mathbb{C}^{d_\xi\times d_\xi}$, where $d_\xi=\dim\xi$. As usual, $\xi$ and $\psi$ are equivalent if there exists a unitary $U$ such that $U\xi(x)=\psi(x)U$ for all $x\in G$.

	For $f\in L^1(G)$, the (group) Fourier transform of $f$ at $\xi$ is the matrix
	\[
	\widehat{f}(\xi)=\int_G f(x)\,\xi(x)^*\,\mathrm{d}x\in\mathbb{C}^{d_\xi\times d_\xi}.
	\]
	
	By the Peter--Weyl theorem, the family
	\[
	\mathcal{B}=\Bigl\{\sqrt{d_\xi}\,\xi_{ij}:\ [\xi]\in\widehat{G},\ 1\le i,j\le d_\xi\Bigr\}
	\]
	is an orthonormal basis of $L^2(G)$ (after fixing one representative in each class). Consequently, every $f\in L^2(G)$ admits the Fourier series expansion
	\[
	f(x)=\sum_{[\xi]\in\widehat{G}} d_\xi\,\mathrm{Tr}\bigl(\xi(x)\widehat{f}(\xi)\bigr),
	\]
	with convergence in $L^2(G)$, and the Plancherel identity holds:
	\[
	\|f\|_{L^2(G)}^2=\sum_{[\xi]\in\widehat{G}} d_\xi\, \|\widehat{f} (\xi)\|_{\mathrm{HS}}^2, \quad \text{ with } \|\widehat{f} (\xi)\|_{\mathrm{HS}}^2=\mathrm{Tr}\bigl(\widehat{f}(\xi)\widehat{f}(\xi)^*\bigr).
	\]
	
	For a distribution $u\in\mathcal{D}'(G)$ we define its Fourier transform by
	\[
	\widehat{u}(\xi)_{ij}=\langle u,\overline{\xi_{ji}}\rangle,\qquad 1\le i,j\le d_\xi,
	\]
	where $\langle\cdot,\cdot\rangle$ denotes distributional duality.
	
	Let $\mathcal{L}_G$ be the Laplace--Beltrami operator on $G$. For each $[\xi]\in\widehat{G}$ the matrix coefficients $\xi_{ij}$ are eigenfunctions of $\mathcal{L}_G$ with the same eigenvalue; we write
	\begin{equation}\label{eq:laplacian_eigen_rewrite}
		-\mathcal{L}_G\,\xi_{ij}(x)=\nu_{[\xi]}\,\xi_{ij}(x),\qquad \nu_{[\xi]}\ge 0.
	\end{equation}
	
	We set the standard weight
	\[
	\langle \xi\rangle := (1+\nu_{[\xi]})^{1/2},
	\]
	which corresponds to the eigenvalues of $(I-\mathcal{L}_G)^{1/2}$.
	
%==================================================
\subsection{Symbols and left-invariant vector fields}\label{subsec:symbols_X} \
%==================================================
	
	Let $\mathfrak{g}$ be the Lie algebra of $G$. For $X\in\mathfrak{g}$ we denote by $L_X$ the associated left-invariant vector field acting on $f\in C^\infty(G)$ by
	\[
	(L_X f)(x):=\frac{\mathrm{d}}{\mathrm{d}t}\bigg|_{t=0} f(x\exp(tX)).
	\]
	
	It commutes with left translations: $L_X\pi_L(y)=\pi_L(y)L_X$ for all $y\in G$, where $(\pi_L(y)f)(x)=f(y^{-1}x)$.
	For simplicity we often write $Xf$ in place of $L_Xf$.
	
	Following the global symbolic calculus on compact groups, for a continuous linear operator
	$P:C^\infty(G)\to C^\infty(G)$ its (matrix) symbol at $(x,\xi)\in G\times \mathrm{Rep}(G)$ is
	\[
	\sigma_P(x,\xi):=\xi(x)^*(P\xi)(x)\in\mathbb{C}^{d_\xi\times d_\xi},
	\]
	where $(P\xi)(x)_{ij}:=P(\xi_{ij})(x)$.
	Then, for $f\in C^\infty(G)$,
	\[
	Pf(x)=\sum_{[\xi]\in\widehat{G}} d_\xi\,\mathrm{Tr}\bigl(\xi(x)\,\sigma_P(x,\xi)\,\widehat{f}(\xi)\bigr).
	\]
	
	If $P$ is left-invariant, then $\sigma_P(x,\xi)$ is independent of $x$ and we simply write $\sigma_P(\xi)$; in this case,
	\[
	\widehat{Pf}(\xi)=\sigma_P(\xi)\,\widehat{f}(\xi),\qquad\forall\,[\xi]\in\widehat{G},
	\]
	and by duality the same identity holds for $f\in\mathcal{D}'(G)$. In particular, \eqref{eq:laplacian_eigen_rewrite} yields
	$\sigma_{\mathcal{L}_G}(\xi)=-\nu_{[\xi]}I_{d_\xi}$.
	
	For each $X\in\mathfrak g$, the operator $iX$ is symmetric on $L^2(G)$.
	Hence, for every class $[\xi]\in\widehat{G}$ we may choose a basis of the representation space so that
	$\sigma_{iX}(\xi)$ is diagonal with real entries $\mu_1(\xi),\dots,\mu_{d_\xi}(\xi)\in\mathbb{R}$, i.e.,
	\[
	\sigma_X(\xi)_{mn}= i\,\mu_m(\xi)\,\delta_{mn}.
	\]
	
	Moreover, the eigenvalues $\{\mu_m(\xi)\}_{m=1}^{d_\xi}$ are invariants of the equivalence class $[\xi]$.
	Since $-(\mathcal{L}_G-X^2)$ is a positive operator and commutes with $X^2$, one obtains the standard estimate
	\begin{equation*}\label{eq:mu_bound_rewrite}
		|\mu_m(\xi)|\le \|X\|\,\langle\xi\rangle,\qquad 1\le m\le d_\xi.
	\end{equation*}
	
	Throughout the paper we assume that the left-invariant vector fields under consideration are normalized, i.e., $\|X\|=1$, where $\|\cdot\|$ is the norm induced by the Killing form.

%==================================================
	\subsection{Partial Fourier series} \label{subsec:partial_fourier} \
%==================================================
	
	Let $\mathbb{T}\simeq\mathbb{R}/2\pi\mathbb{Z}$ %,endowed with the normalized measure  $\mathrm{d}t/(2\pi)$, 
	and consider the product manifold $\mathbb{T}\times G$.
	For $f\in L^1(\mathbb{T}\times G)$ and $\xi\in\mathrm{Rep}(G)$, we define the partial Fourier coefficient with respect to $G$ by
	\[
	\widehat{f}(t,\xi)=\int_G f(t,x)\,\xi(x)^*\,\mathrm{d}x\in\mathbb{C}^{d_\xi\times d_\xi},\qquad t\in\mathbb{T},
	\]
	whose entries are
	\[
	\widehat{f}(t,\xi)_{mn}=\int_G f(t,x)\,\overline{\xi_{nm}(x)}\,\mathrm{d}x,\qquad 1\le m,n\le d_\xi.
	\]
	
	Formally (and in the appropriate distributional sense) one has the partial Fourier series
	\begin{equation*}\label{eq:partial_FS_rewrite}
		f(t,x)=\sum_{[\xi]\in\widehat{G}} d_\xi\,\mathrm{Tr}\bigl(\xi(x)\,\widehat{f}(t,\xi)\bigr).
	\end{equation*}
	
	If $u\in\mathcal{D}'(\mathbb{T}\times G)$, we define its partial Fourier coefficient by requiring that, for each $[\xi]\in\widehat{G}$ and
	$1\le m,n\le d_\xi$, the component $\widehat{u}(\cdot,\xi)_{mn}\in\mathcal{D}'(\mathbb{T})$ satisfies
	\[
	\langle \widehat{u}(\cdot,\xi)_{mn},\psi\rangle=\langle u,\psi\otimes\overline{\xi_{nm}}\rangle,
	\qquad \psi\in C^\infty(\mathbb{T}),
	\]
	where $(\psi\otimes\overline{\xi_{nm}})(t,x):=\psi(t)\overline{\xi_{nm}(x)}$.
	
	The decay/growth properties of these partial Fourier coefficients characterize smoothness and distributional regularity on $\mathbb{T}\times G$ (see \cite{KMR20}).
	
	\begin{theorem}[Characterization of $C^\infty(\mathbb{T}\times G)$]\label{thm:Cinf_char_rewrite}
	A function $f$ belongs to $C^\infty(\mathbb{T}\times G)$ if and only if for every $[\xi]\in\widehat{G}$ and $1\le m,n\le d_\xi$,
	the coefficient $\widehat{f}(\cdot,\xi)_{mn}$ belongs to $C^\infty(\mathbb{T})$ and, for every $\beta\in\mathbb{N}_0$ and every $\ell>0$,
	there exists $C_{\beta,\ell}>0$ such that
	\[
	\sup_{t\in\mathbb{T}}\bigl|\partial_t^\beta \widehat{f}(t,\xi)_{mn}\bigr|\le C_{\beta,\ell}\,\langle\xi\rangle^{-\ell},\qquad\forall\,[\xi]\in\widehat{G},\ 1\le m,n\le d_\xi.
	\]
	\end{theorem}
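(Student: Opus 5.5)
The plan is to prove both directions using two ingredients: the Laplacian $\mathcal{L}=\partial_t^2+\mathcal{L}_G$ on $\mathbb{T}\times G$ (or the pair $\partial_t$, $I-\mathcal{L}_G$), and Sobolev embedding on the compact manifold $\mathbb{T}\times G$. Throughout we use that differentiation in $t$ commutes with taking partial Fourier coefficients. We also use the standard fact $\sum_{[\xi]\in\widehat G} d_\xi^2\langle\xi\rangle^{-2s}<\infty$ for $s>\dim G/2$, which follows from Weyl's law. Together with the bound $d_\xi\le C\langle\xi\rangle^{\dim G/2}$, this converts pointwise coefficient bounds into summable ones.

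\emph{Necessity.} Let $f\in C^\infty(\mathbb{T}\times G)$. Differentiation under the integral shows that each $\widehat f(\cdot,\xi)_{mn}$ is smooth, with $\partial_t^\beta\widehat f(t,\xi)=\widehat{\partial_t^\beta f}(t,\xi)$. For fixed $t$ we apply $(I-\mathcal{L}_G)^{k}$ and integrate by parts, which is allowed because $\mathcal{L}_G$ is symmetric and $\xi_{nm}$ is an eigenfunction. This gives
\[
\partial_t^\beta \widehat f(t,\xi)_{mn}=\langle\xi\rangle^{-2k}\int_G (I-\mathcal{L}_G)^k\partial_t^\beta f(t,x)\,\overline{\xi_{nm}(x)}\,\mathrm dx .
\]
Since $|\xi_{nm}|\le1$ by unitarity, we get $\sup_t|\partial_t^\beta\widehat f(t,\xi)_{mn}|\le \langle\xi\rangle^{-2k}\sup|(I-\mathcal{L}_G)^k\partial_t^\beta f|$. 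Taking $2k\ge\ell$ gives the stated bound.

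\emph{Sufficiency.} Assume the bounds hold and define $F(t,x)=\sum_{[\xi]} d_\xi\,\mathrm{Tr}(\xi(x)\widehat f(t,\xi))$. Each term is bounded by $d_\xi^3\max_{m,n}|\widehat f(t,\xi)_{mn}|$, because the trace has $d_\xi^2$ terms and each entry of $\xi(x)$ has modulus at most $1$. So the series converges absolutely and uniformly once $\ell$ is chosen large enough. The same holds after applying any operator $\partial_t^\beta(I-\mathcal{L}_G)^k$ termwise, since that only multiplies the coefficients by $\langle\xi\rangle^{2k}$, and this is absorbed by raising $\ell$.

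To obtain all mixed derivatives we avoid local coordinates. Instead we show that $\partial_t^\beta(I-\mathcal{L}_G)^kF$ is continuous for all $\beta,k$. This places $F$ in every Sobolev space $H^s(\mathbb{T}\times G)$, because $(I-\partial_t^2)^{a}(I-\mathcal{L}_G)^{k}$ dominates $(I-\mathcal{L})^{s}$. Sobolev embedding then gives $F\in C^\infty$. Finally, $F$ and $f$ have the same partial Fourier coefficients by Peter–Weyl orthogonality, so $F=f$ as distributions, and hence $f$ is smooth.

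The main obstacle is the final identification step and the passage from derivatives in the separate variables to genuine smoothness. The identification of $F$ with $f$ requires $f$ to be at least a distribution for which the coefficients are defined. The regularity step needs the norm equivalence: controlling $\|\partial_t^{\beta}(I-\mathcal{L}_G)^kF\|_{L^2}$ for all $\beta,k$ should control $\|F\|_{H^s}$. On the product of compact manifolds this holds by ellipticity of $I-\partial_t^2-\mathcal{L}_G$, using the inequality $(1+\tau^2+\nu)^s\le C_s(1+\tau^2)^s(1+\nu)^s$ in the joint Fourier variables. Alternatively one can cite \cite{KMR20}, where this characterization is established.
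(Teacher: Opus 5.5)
Your proof is correct. The paper gives no argument of its own for this statement and simply cites \cite{KMR20}, where the characterization goes through the full Fourier transform on the compact group $\mathbb{T}\times G$, whose dual is $\mathbb{Z}\times\widehat G$.

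On that route the sufficiency direction is short. Integrating by parts in $t$ against $e^{-i\tau t}$ turns the hypotheses into $|\widehat f(\tau,\xi)_{mn}|\le C_{\beta,\ell}(1+|\tau|)^{-\beta}\langle\xi\rangle^{-\ell}$, i.e.\ rapid decay in $(\tau,[\xi])$. The known characterization of $C^\infty$ on a compact Lie group by rapidly decaying Fourier coefficients (Ruzhansky--Turunen) then gives $f\in C^\infty(\mathbb{T}\times G)$ directly, with no auxiliary series $F$ to build and identify with $f$.

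You instead construct $F$ and control the anisotropic derivatives $\partial_t^\beta(I-\mathcal{L}_G)^kF$ by uniform convergence. You then re-derive the needed piece of that characterization from ellipticity of $I-\partial_t^2-\mathcal{L}_G$ and Sobolev embedding. Since your $H^s$ bound is itself computed in the joint variables $(\tau,[\xi])$, the two proofs share the same core: Plancherel on $\mathbb{T}\times G$ and Weyl-law summability of $d_\xi^2\langle\xi\rangle^{-2s}$.

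Yours is more self-contained, at the price of the bookkeeping you correctly flagged. You must justify termwise differentiation in $\mathcal{D}'$. You also need the uniqueness fact that a distribution with vanishing partial Fourier coefficients is zero; this follows from density of the span of the functions $\psi\otimes\overline{\xi_{nm}}$ in $C^\infty(\mathbb{T}\times G)$. Your necessity argument, integrating by parts against the eigenfunctions of $\mathcal{L}_G$ and using $|\xi_{nm}|\le 1$, is the standard one.
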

	
	\begin{theorem}[Characterization of $\mathcal{D}'(\mathbb{T}\times G)$]\label{thm:Dprime_char_rewrite}
	A distribution $u$ belongs to $\mathcal{D}'(\mathbb{T}\times G)$ if and only if there exist $K\in\mathbb{N}$ and $C>0$ such that
	\[
	\bigl|\langle \widehat{u}(\cdot,\xi)_{mn},\varphi\rangle\bigr|
	\le C\,p_K(\varphi)\,\langle\xi\rangle^K,\qquad \forall\,\varphi\in C^\infty(\mathbb{T}),\ \forall\,[\xi]\in\widehat{G},\ 1\le m,n\le d_\xi,
	\]
	where
	\[
	p_K(\varphi):=\sum_{\beta=0}^K \|\partial_t^\beta \varphi\|_{L^\infty(\mathbb{T})}.
	\]
	\end{theorem}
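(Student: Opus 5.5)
Both directions follow from duality and the Laplacian eigenvalue relation \eqref{eq:laplacian_eigen_rewrite}; the only real work is choosing the test functions carefully.

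\emph{Necessity.} Let $u\in\mathcal{D}'(\mathbb{T}\times G)$. Since $\mathbb{T}\times G$ is compact, $u$ has finite order: there are $N\in\mathbb{N}$ and $C>0$ with
\[
|\langle u,\Phi\rangle|\le C\sum_{\beta+|\gamma|\le N}\|\partial_t^\beta\partial_x^\gamma\Phi\|_{L^\infty(\mathbb{T}\times G)}
\]
for all test functions $\Phi$. Here $\partial_x^\gamma$ is a word of length $|\gamma|$ in a fixed basis $Y_1,\dots,Y_d$ of normalized left-invariant fields. Apply this to $\Phi=\varphi\otimes\overline{\xi_{nm}}$. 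It remains to bound $\sup_G|\partial_x^\gamma\overline{\xi_{nm}}|$ by $C\langle\xi\rangle^{|\gamma|}$. Since $\partial_x^\gamma\xi=\xi\,\sigma_{\partial^\gamma}(\xi)$ and $\xi$ is unitary, it suffices to show $\|\sigma_{Y_j}(\xi)\|_{op}\le\langle\xi\rangle$. This is exactly the estimate $|\mu_m(\xi)|\le\|X\|\langle\xi\rangle$ recalled above, applied to each $Y_j$. Composing gives $\|\sigma_{\partial^\gamma}(\xi)\|_{op}\le\langle\xi\rangle^{|\gamma|}$, so $|\xi_{ij}|\le1$ yields the required sup bound. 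We then get $|\langle\widehat u(\cdot,\xi)_{mn},\varphi\rangle|\le C' p_N(\varphi)\langle\xi\rangle^N$.

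\emph{Sufficiency.} Assume the bound holds with some $K$. Define $u$ on $\Phi\in C^\infty(\mathbb{T}\times G)$ by
\[
\langle u,\Phi\rangle:=\sum_{[\xi]}d_\xi\sum_{m,n}\bigl\langle\widehat u(\cdot,\xi)_{mn},\,\Phi_{\xi,nm}\bigr\rangle,
\]
where $\Phi_{\xi,nm}(t)$ is the coefficient of $\Phi(t,\cdot)$ along $\overline{\xi_{nm}}$ in the Peter--Weyl expansion, that is, $\Phi_{\xi,nm}(t)=\widehat{\Phi}(t,\overline{\xi})_{nm}$ up to the index conventions, which I would fix carefully. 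This pairing reproduces $\widehat u$ on the tensors $\psi\otimes\overline{\xi_{nm}}$, by orthogonality. Convergence and continuity come from Theorem~\ref{thm:Cinf_char_rewrite}. For every $\ell$, $p_K(\Phi_{\xi,nm})\le C_\ell\langle\xi\rangle^{-\ell}$, with $C_\ell$ controlled by finitely many $C^k$ seminorms of $\Phi$. To see this, write $\langle\xi\rangle^{2\ell}\Phi_{\xi,nm}$ as the coefficient of $(I-\mathcal{L}_G)^\ell\Phi$, using \eqref{eq:laplacian_eigen_rewrite} and self-adjointness. The series is then dominated by
\[
\sum_{[\xi]}d_\xi^{3}\langle\xi\rangle^{K-\ell}.
\]
This converges for $\ell$ large, because $d_\xi\le C\langle\xi\rangle^{d/2}$ and $\sum_{[\xi]}d_\xi^2\langle\xi\rangle^{-s}<\infty$ for $s>d$ (Weyl's law). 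Hence $u$ is continuous, so $u\in\mathcal{D}'$.

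\emph{Uniqueness.} $u$ is determined by its partial coefficients. Indeed, finite sums $\sum\psi\otimes\overline{\xi_{nm}}$ are dense in $C^\infty(\mathbb{T}\times G)$, since the partial Fourier series of a smooth function converges in $C^\infty$ by the same decay estimates.

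The main obstacle is the uniform bookkeeping in the sufficiency direction. One has to handle $t$-derivatives and the $\langle\xi\rangle$ decay simultaneously; since $\partial_t$ commutes with $\mathcal{L}_G$ this is routine. One also needs the polynomial bounds on $d_\xi$ and on $\#\{[\xi]:\langle\xi\rangle\le R\}$. I would take the latter from the standard results in \cite{livropseudo} rather than reprove them.
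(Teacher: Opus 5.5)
Your proof is correct and follows essentially the standard argument behind this characterization; the paper does not prove it but cites \cite{KMR20}. Necessity comes from the finite order of $u$ together with $\|\partial_x^\gamma\xi_{nm}\|_{L^\infty}\lesssim\langle\xi\rangle^{|\gamma|}$ (valid because $\sigma_{Y_j}(\xi)$ is skew-Hermitian, so its operator norm is bounded by its eigenvalues), and sufficiency comes from the termwise definition of $u$, controlled by the $(I-\mathcal{L}_G)^\ell$-decay of partial coefficients of test functions and the summability of $\sum_{[\xi]}d_\xi^2\langle\xi\rangle^{-s}$ for $s>\dim G$.
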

	
	We conclude with a combinatorial identity that will be used to control higher-order derivatives of integrating factors.

	\begin{proposition}[Fa\`a di Bruno formula]\label{prop:faa_di_bruno}
		Let $f\in C^\infty(\mathbb{R})$ and $N\in\mathbb{N}$. Then
		\[
		\frac{\mathrm{d}^N}{\mathrm{d}t^N} e^{f(t)}
		= e^{f(t)}\sum_{\gamma\in\Delta(N)}\frac{N!}{\gamma!}
		\prod_{\ell=1}^N \left( \frac{1}{\ell!}\, \frac{\mathrm{d}^\ell} {\mathrm{d}t^\ell} f(t) \right)^{\gamma_\ell},
		\]
		where
		\[
		\Delta(N)=\left\{\gamma=(\gamma_1,\dots,\gamma_N)\in\mathbb{N}_0^N:\ \sum_{\ell=1}^N \ell\,\gamma_\ell=N\right\},
		\qquad \gamma!:=\gamma_1!\cdots\gamma_N!.
		\]
	\end{proposition}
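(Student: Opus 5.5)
We prove the Faà di Bruno formula for $e^{f}$ by induction on $N$. The cleanest version of the induction uses the auxiliary objects
\[
T_\gamma(t):=\frac{N!}{\gamma!}\prod_{\ell=1}^N\Bigl(\frac{f^{(\ell)}(t)}{\ell!}\Bigr)^{\gamma_\ell},\qquad \gamma\in\Delta(N).
\]
The claim is that $\frac{\mathrm d^N}{\mathrm dt^N}e^{f}=e^{f}Q_N$ with $Q_N=\sum_{\gamma\in\Delta(N)}T_\gamma$.

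For $N=1$, the set $\Delta(1)=\{(1)\}$ and the formula reads $(e^f)'=e^f f'$. For the inductive step, differentiating $e^fQ_N$ gives $e^f(f'Q_N+Q_N')$. We must therefore show
\[
Q_{N+1}=f'Q_N+Q_N'.
\]
Two kinds of terms arise when we expand the right-hand side.

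1. Multiplying $T_\gamma$ by $f'$ produces the multi-index $\gamma+e_1$, viewed in $\Delta(N+1)$ after appending $\gamma_{N+1}=0$.
2. Differentiating the factor $(f^{(\ell)}/\ell!)^{\gamma_\ell}$ produces $\gamma_\ell\,(f^{(\ell)}/\ell!)^{\gamma_\ell-1}\,(\ell+1)\,f^{(\ell+1)}/(\ell+1)!$. This corresponds to the multi-index $\gamma-e_\ell+e_{\ell+1}$.

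The combinatorial step is to collect, for each fixed $\eta\in\Delta(N+1)$, all contributions to $\prod_\ell (f^{(\ell)}/\ell!)^{\eta_\ell}$ and check that the total coefficient is $(N+1)!/\eta!$.

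- The contribution from the first kind (when $\eta_1\ge1$, with $\gamma=\eta-e_1$) is $N!/(\eta-e_1)!=N!\,\eta_1/\eta!$.
- The contribution from the second kind (when $\eta_{\ell+1}\ge1$, with $\gamma=\eta+e_\ell-e_{\ell+1}$) is
\[
\frac{N!}{\gamma!}\,\gamma_\ell\,(\ell+1)=\frac{N!}{\eta!}\,\eta_{\ell+1}(\ell+1).
\]
Here we used $\gamma!=\eta!\,(\eta_\ell+1)/\eta_{\ell+1}$ and $\gamma_\ell=\eta_\ell+1$.

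Summing gives the coefficient
\[
\frac{N!}{\eta!}\Bigl(\eta_1+\sum_{\ell\ge1}(\ell+1)\eta_{\ell+1}\Bigr)=\frac{N!}{\eta!}\sum_{k}k\eta_k=\frac{(N+1)!}{\eta!},
\]
which is exactly the coefficient in the formula. The only care needed is bookkeeping at boundary cases. When $\eta_1=0$ or $\eta_{\ell+1}=0$, the corresponding term is absent, which matches the vanishing factor $\eta_1$ or $\eta_{\ell+1}$. Index $N+1$ can only be reached via $\ell=N$. This bookkeeping is the main (mild) obstacle.

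An alternative route is to cite the classical Faà di Bruno formula for $g\circ f$ with $g=\exp$. Since all derivatives of $\exp$ equal $\exp$, this yields the statement immediately.
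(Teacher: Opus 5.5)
The paper gives no proof of this proposition. It is quoted as the classical Fa\`a di Bruno formula, which is your ``alternative route'' of specializing the chain-rule formula for $g\circ f$ to $g=\exp$.

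Your induction is a correct, self-contained replacement. Each step checks out:
\begin{itemize}
\item the recursion $Q_{N+1}=f'Q_N+Q_N'$;
\item the two preimages $\gamma=\eta-e_1$ and $\gamma=\eta+e_\ell-e_{\ell+1}$;
\item the identity $\gamma!=\eta!\,(\eta_\ell+1)/\eta_{\ell+1}$;
\item the final count $\sum_k k\eta_k=N+1$.
\end{itemize}
The boundary cases are also handled correctly: the only $\eta\in\Delta(N+1)$ with $\eta_{N+1}\neq0$ is $e_{N+1}$, and it is reached only from $\gamma=e_N$ with $\ell=N$.

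Your route gains two things over the citation.

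First, it uses only $(e^f)'=f'e^f$ and the Leibniz rule, so it holds verbatim for complex-valued $f$. That is how the paper actually applies the formula, to $t\mapsto\rho_m(\xi)\widetilde Q(t)$ with $\rho_m(\xi)\in\mathbb{C}$. The statement as written only mentions $f\in C^\infty(\mathbb{R})$, so quoting the real-variable formula would need a one-line remark about the chain rule for $\exp$ on $\mathbb{C}$.

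Second, the paper's estimates never use the exact coefficients. They only use that $\partial_t^N e^{f}=e^{f}Q_N$, where $Q_N$ is a polynomial in $f',\dots,f^{(N)}$ whose monomials each have at most $N$ factors. This gives the bound $|\rho_m(\xi)|^{|\gamma|}\lesssim\langle\xi\rangle^{N}$. That degree bound already follows from your recursion alone, since multiplying by $f'$ raises the degree by one and differentiating preserves it, with no combinatorial bookkeeping needed.
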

	
%==================================================
%==================================================
	\section{Reduction to a normal form}\label{sec:normalform}
%==================================================
%==================================================
	
	In this section we reduce a part of our operator to a convenient normal form by an explicit conjugation acting diagonally on the partial Fourier coefficients. We keep the notation and functional framework introduced in Section~\ref{sec:Overview}. In particular, for each class $[\xi]\in\widehat{G}$ we fix a representative such that \(	\sigma_X(\xi) = \operatorname{diag} (i\mu_1(\xi),\dots,i\mu_{d_\xi}(\xi)),\) with \(\mu_j(\xi)\in\mathbb{R}.\)
	
	We start with the evolution operator $L: C^\infty(\mathbb{T} \times G) \to C^\infty(\mathbb{T} \times G)$ defined by
	\[
	Lu=\partial_t u-(p(t)+i\lambda q(t))Xu,
	\]
	where $p,q\in C^\infty(\mathbb{T};\mathbb{R})$, $q\not\equiv 0$, and $\lambda\in\mathbb{R}$. Let
	\[
	p_0=\frac{1}{2\pi}\int_0^{2\pi}p(t)\,\mathrm{d}t,
	\qquad
	\mathcal{P}(t)=\int_0^t p(\tau)\,\mathrm{d}\tau-p_0 t,
	\]
	and consider the operator
	\[
	L_0=\partial_t-(p_0+i\lambda q(t))X.
	\]
	
	The next proposition shows that $L$ and $L_0$ are conjugate by a Fourier multiplier depending only on $\mathcal{P}$; in particular, solvability can be studied for $L_0$ without loss of generality.
	
	\begin{proposition}[Kirilov, de Moraes, and Ruzhansky, 2021]\label{prop:normal_form}
		Consider the operator $\Psi$ defined by
		\[
		\Psi u(t,x)=\sum_{[\xi]\in\widehat{G}} d_\xi \sum_{m,n=1}^{d_\xi}
		e^{-i\mu_m(\xi)\mathcal{P}(t)}\,\widehat{u}(t,\xi)_{mn}\,\xi_{nm}(x).
		\]
		Then:
		\begin{enumerate}
			\item[(a)] $\Psi$ is an automorphism of $C^\infty(\mathbb{T}\times G)$ and $\mathcal{D}'(\mathbb{T}\times G)$;
			\item[(b)] $L_0\circ\Psi=\Psi\circ L$;
			\item[(c)] $L$ is solvable if and only if $L_0$ is solvable.
		\end{enumerate}
	\end{proposition}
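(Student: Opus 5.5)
The plan is to work entirely on the side of partial Fourier coefficients. Theorems~\ref{thm:Cinf_char_rewrite} and~\ref{thm:Dprime_char_rewrite} reduce every statement about $C^\infty(\mathbb{T}\times G)$ and $\mathcal{D}'(\mathbb{T}\times G)$ to statements about the families $\{\widehat{u}(\cdot,\xi)_{mn}\}$. First I observe that $\mathcal{P}$ is smooth and $2\pi$-periodic, since $\mathcal{P}(2\pi)=\int_0^{2\pi}p-2\pi p_0=0$, and that it is real-valued. Hence the multiplier $e^{-i\mu_m(\xi)\mathcal{P}(t)}$ is a well-defined function on $\mathbb{T}$ of modulus one. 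By construction,
\[
\widehat{\Psi u}(t,\xi)_{mn}=e^{-i\mu_m(\xi)\mathcal{P}(t)}\,\widehat{u}(t,\xi)_{mn}.
\]
The candidate inverse $\Psi^{-1}$ is the same formula with $e^{+i\mu_m(\xi)\mathcal{P}(t)}$, so bijectivity is immediate once continuity and preservation of the spaces are checked.

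For (a) in the smooth case, I apply Leibniz to $\partial_t^\beta\bigl(e^{-i\mu_m(\xi)\mathcal{P}}\widehat{u}_{mn}\bigr)$. I expand the derivatives of the exponential with Proposition~\ref{prop:faa_di_bruno} applied to $f=-i\mu_m(\xi)\mathcal{P}$. Each term is a product of derivatives of $\mathcal{P}$ times a power $\mu_m(\xi)^{|\gamma|}$ with $|\gamma|\le\beta$. Using $|\mu_m(\xi)|\le\langle\xi\rangle$ (recall $\|X\|=1$), I get
\[
\sup_t\bigl|\partial_t^\beta e^{-i\mu_m(\xi)\mathcal{P}(t)}\bigr|\le C_\beta\langle\xi\rangle^{\beta}.
\]
This polynomial loss is absorbed by the rapid decay of $\widehat{u}$, so Theorem~\ref{thm:Cinf_char_rewrite} gives $\Psi u\in C^\infty$. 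For distributions I use the dual pairing
\[
\langle e^{-i\mu_m\mathcal{P}}\widehat{u}_{mn},\varphi\rangle=\langle\widehat{u}_{mn},e^{-i\mu_m\mathcal{P}}\varphi\rangle
\]
and the estimate $p_K(e^{-i\mu_m\mathcal{P}}\varphi)\le C\langle\xi\rangle^K p_K(\varphi)$. This raises the exponent in Theorem~\ref{thm:Dprime_char_rewrite} from $K$ to $2K$, so $\Psi u\in\mathcal{D}'$. One also has to check that the series defining $\Psi u$ converges and that its coefficients are exactly the ones displayed; this follows from the characterizations. The main obstacle I expect lies here: making the Faà di Bruno bookkeeping uniform in $m$ and $[\xi]$. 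It works because the derivatives of $\mathcal{P}$ are bounded on $\mathbb{T}$ independently of $\xi$.

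For (b), I compute coefficients using $\widehat{Xu}(t,\xi)=\sigma_X(\xi)\widehat{u}(t,\xi)$ with $\sigma_X$ diagonal, which gives $\widehat{Xu}_{mn}=i\mu_m(\xi)\widehat{u}_{mn}$. Then
\[
\widehat{L u}_{mn}=\partial_t\widehat{u}_{mn}-i\mu_m(p+i\lambda q)\widehat{u}_{mn}.
\]
On the other side, write $v=\Psi u$. Since $\mathcal{P}'=p-p_0$,
\[
\partial_t\widehat{v}_{mn}=e^{-i\mu_m\mathcal{P}}\bigl(\partial_t\widehat{u}_{mn}-i\mu_m(p-p_0)\widehat{u}_{mn}\bigr).
\]
Subtracting $i\mu_m(p_0+i\lambda q)\widehat{v}_{mn}$ yields
\[
\widehat{L_0\Psi u}_{mn}=e^{-i\mu_m\mathcal{P}}\widehat{Lu}_{mn}=\widehat{\Psi Lu}_{mn}.
\]
By uniqueness of partial Fourier coefficients, $L_0\Psi=\Psi L$.

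For (c), suppose $L_0$ is solvable and take $f\in C^\infty$. Solve $L_0w=\Psi f$ with $w\in C^\infty$, and set $u=\Psi^{-1}w$. Then
\[
\Psi Lu=L_0\Psi u=L_0w=\Psi f,
\]
so $Lu=f$ because $\Psi$ is injective. The converse is symmetric, using $L\Psi^{-1}=\Psi^{-1}L_0$, which follows from (b).
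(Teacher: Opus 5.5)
Your proposal is correct and is essentially the argument the paper intends: the paper writes out no details, referring only to the partial Fourier series proof of Kirilov--de Moraes--Ruzhansky for $\mathbb{S}^3$ and noting that it extends via the diagonal form of $\sigma_X(\xi)$, and your coefficientwise multiplier $e^{-i\mu_m(\xi)\mathcal{P}(t)}$, the Fa\`a di Bruno bound combined with Theorems~\ref{thm:Cinf_char_rewrite} and~\ref{thm:Dprime_char_rewrite}, the computation of $\widehat{L_0\Psi u}(t,\xi)_{mn}$, and the conjugation argument for (c) are exactly that extension written out. The one thing you do not actually verify is the continuity of $\Psi$ and $\Psi^{-1}$ that ``automorphism'' requires (it is not needed for (c)); it follows from the same estimates on $C^\infty$, and on $\mathcal{D}'$ by transposition.
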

	
	\begin{proof}
		The argument is analogous to Propositions~4.6--4.8 in \cite{KMR20} (presented there for $G=\mathbb{S}^3$). The extension to an arbitrary compact Lie group follows from the same partial Fourier series framework and the diagonal form of $\sigma_X(\xi)$ fixed above.
	\end{proof}
	
	In view of Proposition~\ref{prop:normal_form}, it is enough to work with the normal form
	\begin{equation}\label{eq:L_normalform}
		L=\partial_t-(p_0+i\lambda q(t))X,
	\end{equation}
	where $p_0,\lambda\in\mathbb{R}$ and $q\in C^\infty(\mathbb{T};\mathbb{R})$, $q\not\equiv 0$.
	
	Henceforth we assume that $q$ does not change sign on $\mathbb{T}$. In particular,
	\[
	q_0:=\int_0^{2\pi}q(\tau)\,\mathrm{d}\tau\neq 0.
	\]
	
	Without loss of generality we take $q(t)\ge 0$ for all $t\in\mathbb{T}$, so that $q_0>0$.
		
	This choice implies that $Q(t)=\int_0^t q(\tau)\,\mathrm{d}\tau$ is nondecreasing and $\widetilde Q(t)=-\int_t^{2\pi}q(\tau)\,\mathrm{d}\tau$ is nonincreasing, a fact that will be repeatedly used to control the exponential weights arising in the variation-of-constants formulas.
	
%==================================================
%==================================================
	\section{Solvability of a Vekua-type evolution operator}\label{sec:solvability}
%==================================================
%==================================================

	In this section we prove a global $C^\infty$-solvability result for a Vekua-type evolution operator on $\mathbb{T}\times G$.  Consider the operator $P:C^\infty(\mathbb{T}\times G)\to C^\infty(\mathbb{T}\times G)$ defined by
	\begin{equation}\label{eq:P_cv}
		Pu=Lu-(s(t)+i\delta q(t))u-\alpha q(t)\overline{u},
	\end{equation}
	where $L$ is given by \eqref{eq:L_normalform}, $s\in C^\infty(\mathbb{T};\mathbb{R})$, $\delta\in\mathbb{R}$, and $\alpha\in\mathbb{C}\setminus\{0\}$.
	Set
	\[
	s_0=\int_0^{2\pi}s(\tau)\,\mathrm{d}\tau,\qquad
	A_0=s_0+i\delta q_0,\qquad
	B_0=\alpha q_0,\qquad
	C_0=2\pi p_0+i\lambda q_0,
	\]
	and for each $[\xi]\in\widehat{G}$ and $1\le m\le d_\xi$ define
	\[
	\rho_m(\xi)=\sqrt{(\lambda\mu_m(\xi)-i\delta)^2+|\alpha|^2},
	\qquad \text{with }\operatorname{Re}\rho_m(\xi)\ge 0.
	\]

	\begin{theorem}\label{thm:main_varcoef}
		Suppose that the operator $P$ in \eqref{eq:P_cv} satisfies:
		\begin{itemize}
			\item[(I)] $|\alpha|\neq|\delta|$;
			\item[(II)] For all $[\xi]\in\widehat{G}$ and $1\le m\le d_\xi$, there is no integer $k\in\mathbb{Z}$ satisfying
			\[
			\begin{cases}
				\operatorname{Re}\bigl(A_0(2\pi k+\mu_m(\xi)\overline{C_0})\bigr)=0,\\[2mm]
				|2\pi k+\mu_m(\xi)C_0|^2=|A_0|^2-|B_0|^2;
			\end{cases}
			\]
			\item[(III)] There exists $M>0$ such that, for all $[\xi]\in\widehat{G}$ with $\langle\xi\rangle\ge M$,
			\[
			\min\left\{
			\left|e^{-\rho_m(\xi) q_0}-e^{i\mu_m(\xi)p_0 2\pi+s_0}\right|,
			\,
			\left|1-e^{-\rho_m(\xi) q_0+i\mu_m(\xi)p_0 2\pi+s_0}\right|
			\right\}\ge \langle\xi\rangle^{-M},
			\]
			for every $1\le m\le d_\xi$.
		\end{itemize}
		
		Then for every $f\in C^\infty(\mathbb{T}\times G)$ there exists $u\in C^\infty(\mathbb{T}\times G)$ such that $Pu=f$.
	\end{theorem}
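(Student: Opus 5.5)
We reduce $Pu=f$ to a family of $2\times2$ periodic ODE systems via partial Fourier series on $G$, solve each system explicitly, and then show the coefficients decay rapidly so that Theorem~\ref{thm:Cinf_char_rewrite} applies.

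\textbf{Step 1: reduction to ODE systems.} The conjugation couples $\widehat{u}(t,\xi)$ with $\widehat{\overline{u}}(t,\xi)$, which is a Fourier coefficient at $\overline{\xi}$. To avoid tracking that pairing of representations, we write $u=v+iw$ with $v,w$ real. We expand $v$ and $w$ in real partial Fourier coefficients, or equivalently we take the pair $(\widehat{u}(t,\xi)_{mn},\widehat{\overline{u}}(t,\xi)_{mn})$ after using $\sigma_X(\overline\xi)=\overline{\sigma_X(\xi)}$. Since $\overline{Xu}=X\overline u$, this gives, for each $[\xi]$ and $m,n$, a system
\[
\partial_t U=\begin{pmatrix} i\mu_m(\xi)(p_0+i\lambda q)+s+i\delta q & \alpha q\\ \overline{\alpha} q & -i\mu_m(\xi)(p_0-i\lambda q)+s-i\delta q\end{pmatrix}U+F .
\]
Here the $mn$ entry of $\widehat{f}$ and its conjugate counterpart form $F$, and the choice of diagonal basis makes the system decouple in $(m,n)$. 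The matrix has the form $(s+i\mu_mp_0)I+q(t)M_m$, where $M_m$ is a constant matrix with diagonal $\pm i(\delta+\lambda\mu_m)$. Because the time dependence is scalar, all such matrices commute. The eigenvalues of $M_m$ are $\pm\rho_m(\xi)$ up to the appropriate sign and conjugation conventions.

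\textbf{Step 2: diagonalization.} Condition (I) makes $M_m$ diagonalizable, with eigenvector matrix $S_m$ and inverse bounded uniformly in $[\xi]$. We must check that $\rho_m$ stays away from $0$: $\rho_m=0$ would force $\lambda\mu_m=0$ and $|\alpha|=|\delta|$. After conjugating by $S_m$, the system decouples into two scalar equations
\[
y'=\bigl(s+i\mu_mp_0\pm\rho_m q\bigr)y+g .
\]
Each is solved by the usual periodic variation-of-constants formula. The equation with exponent $-\rho_m$ is written with $Q(t)$, and the one with $+\rho_m$ is integrated backwards using $\widetilde Q$, so that the monotonicity of $Q$ and $\widetilde Q$ together with $\operatorname{Re}\rho_m\ge0$ keeps the exponential weights bounded by $1$ times $e^{C\|s\|}$. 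The periodic solution exists uniquely exactly when the denominators $1-e^{s_0+2\pi i\mu_mp_0-\rho_mq_0}$ and $e^{-\rho_mq_0}-e^{s_0+2\pi i\mu_mp_0}$ do not vanish. Condition (II) should be precisely the statement that no Floquet exponent of the $2\times2$ system equals $ik$. To see this, one computes the trace $2s_0+2\pi i\mu_m\cdot$ and the determinant of the monodromy exponent, which yields the two equations in terms of $A_0,B_0,C_0$. So (II) gives solvability for every mode, including the finitely many with $\langle\xi\rangle<M$.

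\textbf{Step 3: estimates.} For $\langle\xi\rangle\ge M$, condition (III) bounds the reciprocals of the denominators by $\langle\xi\rangle^{M}$. The integral kernels are bounded, so $\sup_t|\widehat u(t,\xi)_{mn}|\le C\langle\xi\rangle^{M}\sup_t|\widehat f|$. Derivatives in $t$ are controlled either by Proposition~\ref{prop:faa_di_bruno} on the exponential factors, using $|\mu_m|\le\langle\xi\rangle$ and $|\rho_m|\le C\langle\xi\rangle$, or more simply by differentiating the ODE inductively. Each derivative costs at most a factor $\langle\xi\rangle$. Since $\widehat f$ decays faster than any power, Theorem~\ref{thm:Cinf_char_rewrite} gives $u\in C^\infty$, and $Pu=f$ holds by construction.

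The main obstacle is the uniform bounded-weight estimate for the variation-of-constants kernels when $\rho_m$ is complex. This requires choosing, for each sign, the right base point (forward with $Q$ or backward with $\widetilde Q$) so that $\operatorname{Re}(\pm\rho_m)(Q(t)-Q(\tau))\le0$ on the integration region. It also requires the uniform bound on $S_m^{-1}$, for which (I) is used. The algebraic identification of (II) with nonvanishing of the denominators is the second delicate point.
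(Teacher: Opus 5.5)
Your proposal follows the paper's proof essentially step by step: the same reduction and coupled system, the same diagonalization with eigenvalues $\pm\rho_m(\xi)$, the same choice of $\widetilde Q$ for the $+\rho_m$ equation and $Q$ for the $-\rho_m$ equation, and the same two denominators, controlled by (II) and (III). Your two deviations are improvements.

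\emph{Reading (II) off the monodromy exponent works.} Set $\Lambda=\int_0^{2\pi}M\,dt=(s_0+2\pi i\mu_m p_0)I+q_0M_m$. Then $2\pi i k\in\operatorname{spec}\Lambda$ if and only if $(2\pi i k-s_0-2\pi i\mu_m p_0)^2=q_0^2\rho_m^2$. After $k\mapsto -k$, the imaginary and real parts of this identity are exactly the two equations in (II). This even shows that (II) is equivalent to non-vanishing of the denominators; the paper only proves one direction.

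\emph{Differentiating the ODE for the $t$-derivatives is simpler.} Each derivative costs one factor $\langle\xi\rangle$, which replaces the Fa\`a di Bruno bookkeeping.

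What needs fixing is the algebra in Step 1. The $(2,2)$ entry should be $i\mu_m(\xi)(p_0-i\lambda q)+s-i\delta q$: when you conjugate the equation at $\overline\xi$, the factor $-i\mu_m$ from $\sigma_X(\overline\xi)$ is conjugated too. Since $i\cdot i\lambda\mu_m q=-\lambda\mu_m q$, the constant matrix is
\[
M_m=\begin{pmatrix} i\delta-\lambda\mu_m(\xi) & \alpha\\ \overline{\alpha} & \lambda\mu_m(\xi)-i\delta\end{pmatrix},
\]
and its eigenvalues are exactly $\pm\rho_m(\xi)$. Your displayed matrix fails on two counts. Its $p_0$-part is $i\mu_m p_0\,\mathrm{diag}(1,-1)$, which is not scalar, so the matrices at different times do not commute when $\mu_m p_0\neq0$. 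And a diagonal $\pm i(\delta+\lambda\mu_m)$ would give eigenvalues $\pm\sqrt{|\alpha|^2-(\delta+\lambda\mu_m)^2}$, which (I) does not keep away from zero. The rest of your sketch silently uses the correct version, so the hedge about ``conventions'' should go.

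For the uniform bound on $S_m^{-1}$ you need $\inf|\rho_m|>0$, not just $\rho_m\neq0$. This holds because $|\rho_m|^4=(x^2+|\alpha|^2-\delta^2)^2+4\delta^2x^2$, with $x=\lambda\mu_m(\xi)$, is positive under (I) and tends to $\infty$ as $|x|\to\infty$.

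Finally, ``$Pu=f$ by construction'' skips a check; the paper's write-up avoids it only by starting from an assumed distributional solution. You solve for pairs, so you must show that the second component really is the Fourier coefficient of $\overline u$. This follows from the uniqueness you mention. Let $U,V$ be the functions built from the first and second components. Conjugating and swapping the two equations of the coupled system, using $\overline{Xw}=X\overline w$, shows that $(\overline V,\overline U)$ solves the same system. Under (II) each homogeneous periodic $2\times2$ system has only the trivial solution, so $V=\overline U$.
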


\noindent\emph{Proof.}
Let $f\in C^\infty(\mathbb{T}\times G)$ and assume that $u\in\mathcal{D}'(\mathbb{T}\times G)$ satisfies $Pu=f$, where $P$ is given by \eqref{eq:P_cv}. Taking partial Fourier coefficients in the $G$-variable, for each $[\xi]\in\widehat{G}$, we obtain
\begin{equation}\label{eq:Fourier_eq_matrix}
	\partial_t \widehat{u}(t,\xi) - (p_0 + i\lambda q(t)) \sigma_X(\xi)\widehat{u}(t,\xi)
	- (s(t)+i\delta q(t))\widehat{u}(t,\xi)
	- \alpha q(t)\widehat{\overline u}(t,\xi)
	= \widehat{f}(t,\xi).
\end{equation}

Since $\sigma_X(\xi) = \mathrm{diag}(i\mu_1(\xi),\dots,i\mu_{d_\xi}(\xi))$, the $(m,n)$-entry of \eqref{eq:Fourier_eq_matrix} reads, for $1\le m,n\le d_\xi$,
\begin{equation}\label{eq:Fourier_eq_component}
	\partial_t \widehat{u}(t,\xi)_{mn}
	-\Bigl((p_0+i\lambda q(t))\,i\mu_m(\xi)+s(t)+i\delta q(t)\Bigr)\widehat{u}(t,\xi)_{mn}
	-\alpha q(t)\widehat{\overline u}(t,\xi)_{mn}
	=\widehat{f}(t,\xi)_{mn}.
\end{equation}

At this point we couple \eqref{eq:Fourier_eq_component} with the corresponding equation for $\widehat{\overline u}(t,\xi)_{mn}$. We use that
\[
\widehat{\overline u}(t,\xi)=\overline{\widehat{u}(t,\overline\xi)}
\quad\text{and}\quad
\sigma_X(\overline\xi)=\overline{\sigma_X(\xi)},
\]
so that taking the complex conjugate of the equation for $\overline\xi$ yields a second relation with the same $\mu_m(\xi)$. Define
\[
w(t,\xi)_{mn}=
\begin{bmatrix}
	\widehat u(t,\xi)_{mn}\\[1mm]
	\widehat{\overline u}(t,\xi)_{mn}
\end{bmatrix},
\qquad
F(t,\xi)_{mn}=
\begin{bmatrix}
	\widehat f(t,\xi)_{mn}\\[1mm]
	\widehat{\overline f}(t,\xi)_{mn}
\end{bmatrix}.
\]

Then \eqref{eq:Fourier_eq_component} and its conjugate can be written as the $2\times2$ system
\begin{equation*}\label{eq:w_system_detailed}
	w'(t,\xi)_{mn}=M(t,\xi)_m\,w(t,\xi)_{mn}+F(t,\xi)_{mn},
\end{equation*}
where
\[
M(t,\xi)_m=
\begin{bmatrix}
	(p_0+i\lambda q(t))\,i\mu_m(\xi)+s(t)+i\delta q(t) & \alpha q(t)\\
	\overline\alpha q(t) & (p_0-i\lambda q(t))\,i\mu_m(\xi)+s(t)-i\delta q(t)
\end{bmatrix}.
\]

 Let $S(t)=\int_0^t s(\tau)\,\mathrm{d}\tau$ and set
\begin{equation}\label{eq:change_y}
	y(t,\xi)_{mn}=e^{-i\mu_m(\xi)p_0 t-S(t)}\,w(t,\xi)_{mn}.
\end{equation}

A direct differentiation shows that $y$ satisfies
\begin{equation}\label{eq:y_system_detailed}
	y'(t,\xi)_{mn}=q(t)\,\widetilde M(\xi)_m\,y(t,\xi)_{mn}+e^{-i\mu_m(\xi)p_0 t-S(t)}F(t,\xi)_{mn},
\end{equation}
where
\begin{equation*}\label{eq:Mtilde_def}
	\widetilde M(\xi)_m=
	\begin{bmatrix}
		-\lambda \mu_m(\xi)+i\delta & \alpha\\
		\overline\alpha & \lambda \mu_m(\xi)-i\delta
	\end{bmatrix}.
\end{equation*}

The periodicity in $t$ now appears as a twisted boundary condition for $y$. Indeed, since $\widehat u(\cdot,\xi)_{mn}$ is a distribution on $\mathbb{T}$, it is $2\pi$-periodic, hence $w(\cdot,\xi)_{mn}$ is $2\pi$-periodic. Using \eqref{eq:change_y} and $S(2\pi)=s_0$ we obtain
\begin{equation}\label{eq:y_bc_detailed}
	y(0,\xi)_{mn}=e^{i\mu_m(\xi)p_0 2\pi+s_0}\,y(2\pi,\xi)_{mn}.
\end{equation}

The eigenvalues of $\widetilde M(\xi)_m$ are $\pm\rho_m(\xi)$, where
\[
\rho_m(\xi)=\sqrt{(\lambda\mu_m(\xi)-i\delta)^2+|\alpha|^2},\qquad \operatorname{Re}\rho_m(\xi)\ge 0.
\]

Assumption (I) implies $\rho_m(\xi)\neq 0$ for all $([\xi],m)$, hence $\widetilde M(\xi)_m$ is diagonalizable, with eigenvectors given by
\[V^\pm(\xi)_m =\begin{bmatrix}\alpha\\ (\lambda\mu_m(\xi)-i\delta)\pm\rho_m(\xi)\end{bmatrix}.\]

Set
\begin{equation*}
	T(\xi)_m =
	\begin{bmatrix}V^+(\xi)_m & V^-(\xi)_m\end{bmatrix}=
	\begin{bmatrix}
		\alpha & \alpha\\
		(\lambda\mu_m(\xi)-i\delta)+\rho_m(\xi) & (\lambda\mu_m(\xi)-i\delta)-\rho_m(\xi)
	\end{bmatrix}.
\end{equation*}

Then
\begin{equation}\label{eq:diag_relation}
	T(\xi)_m^{-1}\widetilde M(\xi)_mT(\xi)_m=\rho_m(\xi)
	\begin{bmatrix}
		1 & 0\\
		0 & -1
	\end{bmatrix},
\end{equation}
and one checks that
\[
T(\xi)_m^{-1}=\frac{-1}{2\alpha\rho_m(\xi)}
\begin{bmatrix}
	(\lambda\mu_m(\xi)-i\delta)-\rho_m(\xi) & -\alpha\\
	-(\lambda\mu_m(\xi)-i\delta)-\rho_m(\xi) & \alpha
\end{bmatrix}.
\]

Now set
\[
z(t,\xi)_{mn}=T(\xi)_m^{-1}y(t,\xi)_{mn},
\]
and 
\[
G(t,\xi)_{mn}=T(\xi)_m^{-1}F(t,\xi)_{mn}=
\begin{bmatrix}G_1(t,\xi)_{mn}\\ G_2(t,\xi)_{mn}\end{bmatrix}.
\]

Using \eqref{eq:y_system_detailed} and \eqref{eq:diag_relation}, we obtain the decoupled system
\begin{equation}\label{eq:z_system_detailed}
	\begin{cases}
		z_1'(t,\xi)_{mn}=\rho_m(\xi)q(t)\,z_1(t,\xi)_{mn}+e^{-i\mu_m(\xi)p_0 t-S(t)}G_1(t,\xi)_{mn},\\
		z_2'(t,\xi)_{mn}=-\rho_m(\xi)q(t)\,z_2(t,\xi)_{mn}+e^{-i\mu_m(\xi)p_0 t-S(t)}G_2(t,\xi)_{mn}.
	\end{cases}
\end{equation}

Furthermore, since  $T(\xi)_m$ is independent of $t$, the boundary condition \eqref{eq:y_bc_detailed} becomes
\begin{equation}\label{eq:z_bc_detailed}
	z(0,\xi)_{mn}=e^{i\mu_m(\xi)p_0 2\pi+s_0}\,z(2\pi,\xi)_{mn}.
\end{equation}

We now solve \eqref{eq:z_system_detailed} explicitly and determine the constants through \eqref{eq:z_bc_detailed}. Let
\[
Q(t)=\int_0^t q(\tau)\,\mathrm{d}\tau,\qquad
\widetilde Q(t)=-\int_t^{2\pi} q(\tau)\,\mathrm{d}\tau,
\]
so that $Q(0)=0$, $Q(2\pi)=q_0$, $\widetilde Q(0)=-q_0$, $\widetilde Q(2\pi)=0$, and $\widetilde Q(t)\le 0$. Solving by variation of constants,  we have
\begin{align}
	z_1(t,\xi)_{mn}
	&=
	-e^{\rho_m(\xi)\widetilde Q(t)}
	\int_t^{2\pi} e^{-\rho_m(\xi)\widetilde Q(\sigma)}e^{-i\mu_m(\xi)p_0\sigma-S(\sigma)}G_1(\sigma,\xi)_{mn}\,\mathrm{d}\sigma \label{eq:z1_formula_detailed}  \\
	& \hspace{5cm} +K_1(\xi)_{mn}e^{\rho_m(\xi)\widetilde Q(t)}, \nonumber \\
	z_2(t,\xi)_{mn}
	&= 	e^{-\rho_m(\xi)Q(t)} \int_0^{t} e^{\rho_m(\xi)Q(\sigma)}e^{-i\mu_m(\xi)p_0\sigma-S(\sigma)}G_2(\sigma,\xi)_{mn}\,\mathrm{d}\sigma  \label{eq:z2_formula_detailed} \\
	&  \hspace{5cm} +K_2(\xi)_{mn}e^{-\rho_m(\xi)Q(t)}. \nonumber
\end{align}

Imposing \eqref{eq:z_bc_detailed} leads to linear equations for $K_1(\xi)_{mn}$ and $K_2(\xi)_{mn}$, namely
\begin{equation}\label{eq:K1_den_detailed}
	\left(e^{-\rho_m(\xi)q_0}-e^{i\mu_m(\xi)p_0 2\pi+s_0}\right)K_1(\xi)_{mn}
	=
	\int_0^{2\pi} e^{-\rho_m(\xi)(q_0+\widetilde Q(\sigma))}e^{-i\mu_m(\xi)p_0\sigma-S(\sigma)}G_1(\sigma,\xi)_{mn}\,\mathrm{d}\sigma,
\end{equation}
and
\begin{equation}\label{eq:K2_den_detailed}
	\left(1-e^{-\rho_m(\xi)q_0+i\mu_m(\xi)p_0 2\pi+s_0}\right)K_2(\xi)_{mn}
	=
	\int_0^{2\pi} e^{\rho_m(\xi)(Q(\sigma)-q_0)}e^{-i\mu_m(\xi)p_0\sigma-S(\sigma)}G_2(\sigma,\xi)_{mn}\,\mathrm{d}\sigma.
\end{equation}

At this stage, the only potential obstruction arises from the denominators in \eqref{eq:K1_den_detailed}--\eqref{eq:K2_den_detailed}. Condition (II) eliminates exact resonances (i.e., zeros of these denominators), while condition (III) is employed later to manage small denominators as $\langle\xi\rangle\to\infty$. We now clarify the resonance exclusion in a more explicit manner.

Assume first that
\[
e^{-\rho_m(\xi)q_0}-e^{i\mu_m(\xi)p_0 2\pi+s_0}=0.
\]

Since $q\ge 0$ and $q\not\equiv 0$, we have $q_0>0$. Then, there exists an integer $k\in\mathbb{Z}$ such that \(-\rho_m(\xi)q_0 = i\mu_m(\xi)p_0 2\pi+s_0+i2\pi k,\) hence
\begin{equation*}\label{eq:rho_from_resonance_plus}
	\rho_m(\xi)= -\,\frac{s_0+i2\pi(\mu_m(\xi)p_0+k)}{q_0}.
\end{equation*}

Similarly, if
\[
1-e^{-\rho_m(\xi)q_0+i\mu_m(\xi)p_0 2\pi+s_0}=0,
\]
then $-\rho_m(\xi)q_0+i\mu_m(\xi)p_0 2\pi+s_0=i2\pi k$ for some $k\in\mathbb{Z}$, i.e.
\begin{equation*}\label{eq:rho_from_resonance_minus}
	\rho_m(\xi)= \frac{s_0+i2\pi(\mu_m(\xi)p_0-k)}{q_0}.
\end{equation*}

In both cases we can write, after possibly replacing $k$ by $-k$, that
\begin{equation}\label{eq:rho_param}
	\rho_m(\xi)=\pm\frac{s_0+i2\pi(\mu_m(\xi)p_0+k)}{q_0},
\end{equation}
so that
\[\operatorname{Re}\rho_m(\xi)=\pm\frac{s_0}{q_0},\qquad
\operatorname{Im}\rho_m(\xi)=\pm\frac{2\pi(\mu_m(\xi)p_0+k)}{q_0}.
\]

On the other hand, by definition,
\[
(\rho_m(\xi))^2=(\lambda\mu_m(\xi)-i\delta)^2+|\alpha|^2
=\lambda^2\mu_m(\xi)^2-\delta^2+|\alpha|^2-i\,2\lambda\delta\,\mu_m(\xi).
\]

Writing $\rho_m(\xi)=a+ib$ with $a=\operatorname{Re}\rho_m(\xi)$ and $b=\operatorname{Im}\rho_m(\xi)$, we have
\[
(\rho_m(\xi))^2=(a+ib)^2=(a^2-b^2)+i(2ab).
\]

Therefore,
\begin{equation}\label{eq:ab_system}
	\begin{cases}
		2ab=-2\lambda\delta\,\mu_m(\xi),\\[2mm]
		a^2-b^2=\lambda^2\mu_m(\xi)^2-\delta^2+|\alpha|^2.
	\end{cases}
\end{equation}

Substituting the values of $a$ and $b$ from \eqref{eq:rho_param} into
\eqref{eq:ab_system} gives
\begin{equation}\label{eq:system_after_substitution}
	\begin{cases}
		\displaystyle
		2\Bigl(\pm\frac{s_0}{q_0}\Bigr)\Bigl(\pm\frac{2\pi(\mu_m(\xi)p_0+k)}{q_0}\Bigr)
		=-2\lambda\delta\,\mu_m(\xi),\\[3mm]
		\displaystyle
		\Bigl(\pm\frac{s_0}{q_0}\Bigr)^2-\Bigl(\pm\frac{2\pi(\mu_m(\xi)p_0+k)}{q_0}\Bigr)^2
		=\lambda^2\mu_m(\xi)^2-\delta^2+|\alpha|^2.
	\end{cases}
\end{equation}

Since the signs cancel in the first line, \eqref{eq:system_after_substitution} is
equivalent to
\begin{equation}\label{eq:system_simplified}
	\begin{cases}
		\displaystyle
		\frac{s_0\,2\pi(\mu_m(\xi)p_0+k)}{q_0^2}=-\lambda\delta\,\mu_m(\xi),\\[3mm]
		\displaystyle
		\frac{s_0^2-\bigl(2\pi(\mu_m(\xi)p_0+k)\bigr)^2}{q_0^2}
		=\lambda^2\mu_m(\xi)^2-\delta^2+|\alpha|^2.
	\end{cases}
\end{equation}

Now recall $A_0=s_0+i\delta q_0$, $B_0=\alpha q_0$, and $C_0=2\pi p_0+i\lambda q_0$.
A direct expansion gives
\[
A_0\bigl(2\pi k+\mu_m(\xi)\overline{C_0}\bigr)
=(s_0+i\delta q_0)\bigl(2\pi k+2\pi\mu_m(\xi)p_0-i\lambda q_0\mu_m(\xi)\bigr),
\]
and hence
\[
\operatorname{Re}\!\left(A_0(2\pi k+\mu_m(\xi)\overline{C_0})\right)
=s_0\,2\pi(\mu_m(\xi)p_0+k)+\lambda\delta\,\mu_m(\xi)\,q_0^2.
\]

Using the first identity in \eqref{eq:system_simplified}, we obtain
\[
\operatorname{Re}\!\left(A_0(2\pi k+\mu_m(\xi)\overline{C_0})\right)=0.
\]

Likewise,
\[
|2\pi k+\mu_m(\xi)C_0|^2
=\bigl(2\pi k+2\pi\mu_m(\xi)p_0\bigr)^2+\lambda^2\mu_m(\xi)^2q_0^2,
\]
with
\[
|A_0|^2=s_0^2+\delta^2q_0^2, \qquad |B_0|^2=|\alpha|^2q_0^2.
\]

Multiplying the second identity in \eqref{eq:system_simplified} by $q_0^2$ yields
\[
\bigl(2\pi k+2\pi\mu_m(\xi)p_0\bigr)^2+\lambda^2\mu_m(\xi)^2q_0^2
=s_0^2+\delta^2q_0^2-|\alpha|^2q_0^2
=|A_0|^2-|B_0|^2,
\]
that is,
\[
|2\pi k+\mu_m(\xi)C_0|^2=|A_0|^2-|B_0|^2.
\]

We have therefore produced an integer $k\in\mathbb{Z}$ satisfying exactly the system
in (II), contradicting the hypothesis. This proves that neither denominator in
\eqref{eq:K1_den_detailed}--\eqref{eq:K2_den_detailed} can vanish.
Hence, we have
\begin{align*}
K_1(\xi)_{mn} & = \int_0^{2\pi} \frac{e^{-\rho_m(\xi)(q_0 + \widetilde Q(\sigma))} e^{-i\mu_m(\xi) p_0 \sigma - S(\sigma)}}{e^{-\rho_m(\xi) q_0} - e^{i\mu_m(\xi) p_0 2\pi + s_0}} G_1(\sigma,\xi)_{mn} \, \mathrm{d}\sigma,\\[3mm]
K_2(\xi)_{mn} & = \int_0^{2\pi} \frac{e^{\rho_m(\xi)(Q(\sigma) - q_0)} e^{-i\mu_m(\xi) p_0 \sigma - S(\sigma)}}{1 - e^{-\rho_m(\xi) q_0 + i\mu_m(\xi) p_0 2\pi + s_0}} G_2(\sigma,\xi)_{mn} \, \mathrm{d}\sigma.
\end{align*}

	We now reconstruct $\widehat u$ and explain how the smoothness of $u$ follows from estimates on the coefficients. 
	
	Since $w=e^{i\mu_m(\xi)p_0 t+S(t)}\,T(\xi)_m\,z$, we have
	\begin{equation}\label{eq:u_reconstruction_detailed}
		\widehat u(t,\xi)_{mn}
		=
		\alpha\,e^{i\mu_m(\xi)p_0 t+S(t)}\Bigl(z_1(t,\xi)_{mn}+z_2(t,\xi)_{mn}\Bigr).
	\end{equation}
	
	By the characterization of $C^\infty(\mathbb{T}\times G)$ in terms of partial Fourier coefficients (Theorem~\ref{thm:Cinf_char_rewrite}), it is enough to show that for every $\beta\in\mathbb{N}_0$ and every $\ell>0$ there exists $C_{\beta,\ell}>0$ such that
	\begin{equation}\label{eq:goal_decay}
		\sup_{t\in\mathbb{T}}\bigl|\partial_t^\beta \widehat u(t,\xi)_{mn}\bigr|
		\le C_{\beta,\ell}\,\langle\xi\rangle^{-\ell},
		\qquad \forall\,[\xi]\in\widehat G,\ 1\le m,n\le d_\xi.
	\end{equation}
	
	Using Leibniz' rule in \eqref{eq:u_reconstruction_detailed}, it suffices to control derivatives of the exponential factor and of $z_1+z_2$.

	To estimate the exponential factor, we write
	\[
	e^{i\mu_m(\xi)p_0 t+S(t)}=e^{i\mu_m(\xi)p_0 t}\,e^{S(t)},
	\qquad S(t)=\int_0^t s(\tau)\,d\tau.
	\]
	
	For $r\in\mathbb{N}_0$, Leibniz' rule yields
	\begin{equation}\label{eq:der_exp_split_final}
		\partial_t^r\!\left(e^{i\mu_m(\xi)p_0 t+S(t)}\right)
		=\sum_{j=0}^r \binom{r}{j} (i\mu_m(\xi)p_0)^{\,r-j}\,e^{i\mu_m(\xi)p_0 t}\,
		\partial_t^{\,j}\!\left(e^{S(t)}\right).
	\end{equation}
	
	We now use Fa\`a di Bruno (Proposition~\ref{prop:faa_di_bruno}) in the form
	\begin{equation}\label{eq:fdb_eS_final}
		\partial_t^{\,j}\!\left(e^{S(t)}\right)
		=
		e^{S(t)}\sum_{\gamma\in\Delta(j)} \frac{j!}{\gamma!}
		\prod_{\ell=1}^{j}\left(\frac{1}{\ell!}\,\partial_t^{\,\ell}S(t)\right)^{\gamma_\ell},
		\qquad j\ge1,
	\end{equation}
	where $\Delta(j)=\{\gamma\in\mathbb{N}_0^j:\sum_{\ell=1}^j \ell\gamma_\ell=j\}$.
	Since $s\in C^\infty(\mathbb{T})$, every derivative $\partial_t^\ell S(t)$ is bounded on $\mathbb{T}$; hence for each $j$ there exists $C_j>0$ such that
	\begin{equation}\label{eq:eS_bound_final}
		\sup_{t\in\mathbb{T}}\left|\partial_t^{\,j}\!\left(e^{S(t)}\right)\right|\le C_j.
	\end{equation}
	
	Using the normalization bound $|\mu_m(\xi)|\lesssim\langle\xi\rangle$ and combining
	\eqref{eq:der_exp_split_final}--\eqref{eq:eS_bound_final}, we obtain: for each $r\in\mathbb{N}_0$ there exists $C_r>0$ such that
	\begin{equation}\label{eq:exp_factor_growth_final}
		\sup_{t\in\mathbb{T}}
		\left|\partial_t^r e^{i\mu_m(\xi)p_0 t+S(t)}\right|
		\le C_r\,\langle\xi\rangle^{r}.
	\end{equation}
	
	Consequently, to prove \eqref{eq:goal_decay} it is enough to show that for each $\beta\in\mathbb{N}_0$ and each $\ell>0$,
	\begin{equation}\label{eq:goal_decay_z_final}
		\sup_{t\in\mathbb{T}}\bigl|\partial_t^\beta z_j(t,\xi)_{mn}\bigr|
		\le C_{\beta,\ell}\,\langle\xi\rangle^{-\ell},
		\qquad j=1,2.
	\end{equation}
	
	\smallskip
	
	We next obtain the decay properties of $G_1$ and $G_2$ appearing in the decoupled system
	\eqref{eq:z_system_detailed}. Recall that $G=T^{-1}F$, where $F=(\widehat f,\widehat{\overline f})^T$.
	From $f\in C^\infty(\mathbb{T}\times G)$ and Theorem~\ref{thm:Cinf_char_rewrite}, we have: for every
	$\beta\in\mathbb{N}_0$ and every $N>0$ there exists $C_{\beta,N}>0$ such that
	\begin{equation*}\label{eq:f_decay_final}
		\sup_{t\in\mathbb{T}}|\partial_t^\beta \widehat f(t,\xi)_{mn}|\le C_{\beta,N}\langle\xi\rangle^{-N}.
	\end{equation*}
	
	Moreover, as recorded earlier, $|\rho_m(\xi)|\lesssim\langle\xi\rangle$ and $|\rho_m(\xi)|^{-1}\lesssim 1$,
	so the entries of $T(\xi)_m^{-1}$ have at most polynomial growth in $\langle\xi\rangle$. It follows that
	$G_1$ and $G_2$ inherit rapid decay: for each $\beta\in\mathbb{N}_0$ and $N>0$ there exists $C_{\beta,N}>0$
	such that
	\begin{equation}\label{eq:G_decay_final}
		\sup_{t\in\mathbb{T}}|\partial_t^\beta G_j(t,\xi)_{mn}|\le C_{\beta,N}\langle\xi\rangle^{-N},
		\qquad j=1,2.
	\end{equation}
	
	\smallskip
	
	We now estimate the derivatives of the integrating factors in \eqref{eq:z1_formula_detailed}--\eqref{eq:z2_formula_detailed}.
	We treat $e^{\rho_m(\xi)\widetilde Q(t)}$; the bound for $e^{-\rho_m(\xi)Q(t)}$ is analogous. Since $\rho_m(\xi)$
	is independent of $t$, Fa\`a di Bruno applied to $t\mapsto \rho_m(\xi)\widetilde Q(t)$ gives, for $r\ge1$,
	\begin{equation*}\label{eq:fdb_erhoQ_final}
		\partial_t^{\,r}\!\left(e^{\rho_m(\xi)\widetilde Q(t)}\right)
		=
		e^{\rho_m(\xi)\widetilde Q(t)}\sum_{\gamma\in\Delta(r)} \frac{r!}{\gamma!}
		\prod_{\ell=1}^{r}\left(\frac{1}{\ell!}\,\partial_t^{\,\ell}(\rho_m(\xi)\widetilde Q(t))\right)^{\gamma_\ell}.
	\end{equation*}
	
	Since $\partial_t^\ell(\rho_m(\xi)\widetilde Q(t))=\rho_m(\xi)\partial_t^\ell\widetilde Q(t)$, we can factor out
	powers of $\rho_m(\xi)$ and use $|\gamma|:=\sum_{\ell=1}^r\gamma_\ell\le r$ to obtain
	\[
	\left|\partial_t^{\,r}\!\left(e^{\rho_m(\xi)\widetilde Q(t)}\right)\right|
	\le
	|e^{\rho_m(\xi)\widetilde Q(t)}|
	\sum_{\gamma\in\Delta(r)} C\,|\rho_m(\xi)|^{|\gamma|}
	\le
	C_r\,|e^{\rho_m(\xi)\widetilde Q(t)}|\,\langle\xi\rangle^{r}.
	\]
	
	Finally, $\widetilde Q(t)\le0$ and $\operatorname{Re}\rho_m(\xi)\ge0$ imply $|e^{\rho_m(\xi)\widetilde Q(t)}|\le1$, hence
	\begin{equation}\label{eq:exp_rhoQ_growth_final}
		\sup_{t\in\mathbb{T}}\left|\partial_t^{\,r} e^{\rho_m(\xi)\widetilde Q(t)}\right|
		\le C_r\,\langle\xi\rangle^{r},
		\qquad r\in\mathbb{N}_0.
	\end{equation}
	
	\smallskip
	
	We now prove \eqref{eq:goal_decay_z_final} for $z_1$; the estimate for $z_2$ follows by the same arguments with
	$Q$ in place of $\widetilde Q$. Decompose $z_1=z_{1,a}+z_{1,b}$, where
	\begin{align*}
		\begin{cases}
			z_{1,a}(t,\xi)_{mn} & = -e^{\rho_m(\xi)\widetilde Q(t)}
			\int_t^{2\pi} e^{-\rho_m(\xi)\widetilde Q(\sigma)} e^{-i\mu_m(\xi) p_0 \sigma-S(\sigma)}G_1(\sigma,\xi)_{mn}\,d\sigma,
			\\[1mm]
			z_{1,b}(t,\xi)_{mn} & = K_1(\xi)_{mn} e^{\rho_m(\xi) \widetilde  {Q}(t)}.
		\end{cases}
	\end{align*}
	
	Fix $\beta\in\mathbb{N}_0$. Differentiating $z_{1,a}$ under the integral sign, we obtain
	\begin{align}
		\partial_t^\beta z_{1,a}(t,\xi)_{mn}
		= &
		-\partial_t^{\,\beta}\!\left(e^{\rho_m(\xi)\widetilde Q(t)}\right)
		\int_t^{2\pi} H(\sigma,\xi)_{mn}\,d\sigma  \label{eq:diff_z1a_final}.\\
		& +\sum_{r=1}^{\beta}\binom{\beta}{r}\,
		\partial_t^{\,\beta-r}\!\left(e^{\rho_m(\xi)\widetilde Q(t)}\right)\,
		\partial_t^{\,r-1}H(t,\xi)_{mn},\label{eq:diff_z1a_final2} \nonumber
	\end{align}
	where
	\[
	H(\sigma,\xi)_{mn}:=
	e^{-\rho_m(\xi)\widetilde Q(\sigma)}e^{-i\mu_m(\xi)p_0\sigma-S(\sigma)}G_1(\sigma,\xi)_{mn}.
	\]
	
	We first bound the integral term in \eqref{eq:diff_z1a_final}. For $\sigma\in[t,2\pi]$ we have
	$\widetilde Q(\sigma)\le \widetilde Q(t)$, hence $\widetilde Q(t)-\widetilde Q(\sigma)\le0$, and therefore
	\[
	\left|e^{\rho_m(\xi)(\widetilde Q(t)-\widetilde Q(\sigma))}\right|
	\le 1,
	\]
	noticing that $\operatorname{Re}\rho_m(\xi)\geq 0$.
	
	It follows that
	\begin{align*}
		\left|\int_t^{2\pi} H(\sigma,\xi)_{mn}\,d\sigma\right|
		&=
		\left|\int_t^{2\pi} e^{\rho_m(\xi)\widetilde Q(t)}e^{-\rho_m(\xi)\widetilde Q(\sigma)}
		e^{-i\mu_m(\xi)p_0\sigma-S(\sigma)}G_1(\sigma,\xi)_{mn}\,d\sigma\right|\\
		&\le
		\int_t^{2\pi} e^{-S(\sigma)}\,|G_1(\sigma,\xi)_{mn}|\,d\sigma
		\le
		C\,\sup_{\sigma\in\mathbb{T}}|G_1(\sigma,\xi)_{mn}|.
	\end{align*}
	
	Using \eqref{eq:exp_rhoQ_growth_final}, we obtain
	\begin{equation}\label{eq:first_term_z1a_final}
		\sup_{t\in\mathbb{T}}
		\left|
		\partial_t^{\,\beta}\!\left(e^{\rho_m(\xi)\widetilde Q(t)}\right)
		\int_t^{2\pi} H(\sigma,\xi)_{mn}\,d\sigma
		\right|
		\le
		C_\beta\,\langle\xi\rangle^{\beta}\,\sup_{\sigma\in\mathbb{T}}|G_1(\sigma,\xi)_{mn}|.
	\end{equation}
	
	Next we estimate $\partial_t^{\,r-1}H(t,\xi)_{mn}$ in \eqref{eq:diff_z1a_final}. By Leibniz' rule,
	\begin{align}
		\partial_t^{\,r-1}H(t,\xi)_{mn}
		&=
		\sum_{j=0}^{r-1}\binom{r-1}{j}\,
		\partial_t^{\,j}\!\left(e^{-\rho_m(\xi)\widetilde Q(t)}e^{-i\mu_m(\xi)p_0 t-S(t)}\right)\,
		\partial_t^{\,r-1-j}G_1(t,\xi)_{mn}.\label{eq:H_diff_final}
	\end{align}
	
	The derivatives of $e^{-i\mu_m(\xi)p_0 t-S(t)}$ satisfy the same polynomial bounds as in
	\eqref{eq:exp_factor_growth_final}, and the derivatives of $e^{-\rho_m(\xi)\widetilde Q(t)}$ are controlled by
	\eqref{eq:exp_rhoQ_growth_final} (the sign does not change the estimate). Hence there exists $C_j>0$ such that
	\[
	\sup_{t\in\mathbb{T}}
	\left|
	\partial_t^{\,j}\!\left(e^{-\rho_m(\xi)\widetilde Q(t)}e^{-i\mu_m(\xi)p_0 t-S(t)}\right)
	\right|
	\le C_j\,\langle\xi\rangle^{j}.
	\]
	
	Substituting into \eqref{eq:H_diff_final} gives
	\begin{equation}\label{eq:H_diff_bound_final}
		\sup_{t\in\mathbb{T}}|\partial_t^{\,r-1}H(t,\xi)_{mn}|
		\le
		C_r\,\langle\xi\rangle^{r-1}\,
		\sup_{0\le j\le r-1}\ \sup_{t\in\mathbb{T}}|\partial_t^{\,j}G_1(t,\xi)_{mn}|.
	\end{equation}
	
	Combining \eqref{eq:exp_rhoQ_growth_final}--\eqref{eq:first_term_z1a_final},
	and \eqref{eq:H_diff_bound_final}, we conclude that, for every $\beta\in\mathbb{N}_0$,
	\begin{equation*}\label{eq:z1a_beta_bound_final}
		\sup_{t\in\mathbb{T}}|\partial_t^\beta z_{1,a}(t,\xi)_{mn}|
		\le
		C_\beta\,\langle\xi\rangle^{\beta}\,
		\sup_{0\le j\le \beta}\ \sup_{t\in\mathbb{T}}|\partial_t^{\,j}G_1(t,\xi)_{mn}|.
	\end{equation*}
	
	Using the rapid decay \eqref{eq:G_decay_final} and choosing $N$ arbitrarily large, we obtain that
	$\partial_t^\beta z_{1,a}(t,\xi)_{mn}$ is rapidly decaying in $\langle\xi\rangle$, uniformly in $t$.
	
	It remains to estimate $z_{1,b}(t,\xi)_{mn}=K_1(\xi)_{mn}e^{\rho_m(\xi)\widetilde Q(t)}$.
	From \eqref{eq:K1_den_detailed}, the numerator is rapidly decaying by \eqref{eq:G_decay_final} and the boundedness
	of the exponential weights. Condition (II) guarantees the denominator never vanishes; however it may become small for
	large $\langle\xi\rangle$. This is precisely where (III) is used: for $\langle\xi\rangle$ large enough,
	\[
	\left|e^{-\rho_m(\xi)q_0}-e^{i\mu_m(\xi)p_0 2\pi+s_0}\right|^{-1}\le \langle\xi\rangle^{M}.
	\]
	
	Hence $K_1(\xi)_{mn}$ has at most polynomial growth, and choosing the decay order in \eqref{eq:G_decay_final} large
	enough yields, for each $N>0$,
	\begin{equation}\label{eq:K1_decay_final}
		|K_1(\xi)_{mn}|\le C_N\,\langle\xi\rangle^{-N}.
	\end{equation}
	
	Combining \eqref{eq:exp_rhoQ_growth_final} with \eqref{eq:K1_decay_final} and Leibniz' rule shows that, for every
	$\beta\in\mathbb{N}_0$ and every $N>0$,
	\[
	\sup_{t\in\mathbb{T}}|\partial_t^\beta z_{1,b}(t,\xi)_{mn}|
	\le C_{\beta,N}\langle\xi\rangle^{-N}.
	\]
	
	Therefore $z_1=z_{1,a}+z_{1,b}$ satisfies the rapid-decay estimate \eqref{eq:goal_decay_z_final}.
	
	The estimate for $z_2$ is obtained in the same way from \eqref{eq:z2_formula_detailed} and \eqref{eq:K2_den_detailed},
	using the second lower bound in (III). With \eqref{eq:goal_decay_z_final} established for $j=1,2$, the reconstruction
	formula \eqref{eq:u_reconstruction_detailed} together with \eqref{eq:exp_factor_growth_final} yields \eqref{eq:goal_decay}.
	Therefore $\widehat u(\cdot,\xi)_{mn}\in C^\infty(\mathbb{T})$ with rapid decay in $[\xi]$, and
	Theorem~\ref{thm:Cinf_char_rewrite} implies $u\in C^\infty(\mathbb{T}\times G)$ and $Pu=f$. \hfill $\square$

%==================================================
%==================================================
	\section{The case $\lambda=0$}\label{sec:lambda0}
%==================================================
%==================================================
	
	In this section we specialize the previous solvability result to the case $\lambda=0$. In this case the spectral parameter $\rho_m(\xi)$ becomes independent of $[\xi]$, and the solvability conditions admit a more explicit form.
	
	When $\lambda=0$, the operator $P$ reads
	\begin{equation}\label{eq:P_lambda0}
		Pu=\partial_t u-p_0Xu-(s(t)+i\delta q(t))u-\alpha q(t)\overline{u},
	\end{equation}
	with $p_0\in\mathbb{R}$, $q,s\in C^\infty(\mathbb{T};\mathbb{R})$, $q\not\equiv 0$, $q\ge 0$, $\delta\in\mathbb{R}$, and $\alpha\in\mathbb{C}\setminus\{0\}$.
	We keep the notation
	\[
	s_0=\int_0^{2\pi}s(\tau)\,\mathrm{d}\tau,\qquad
	q_0=\int_0^{2\pi}q(\tau)\,\mathrm{d}\tau,\qquad
	A_0=s_0+i\delta q_0,\qquad
	B_0=\alpha q_0,\qquad
	C_0=2\pi p_0.
	\]
	
	\begin{theorem}\label{thm:lambda0_cases}
		Suppose that the operator $P$ in \eqref{eq:P_lambda0} satisfies one of the following conditions:
		\begin{itemize}
			\item[(1)] $|B_0| > |A_0|$;
			\item[(2)] $|B_0| \le |A_0|$, $|\alpha| > |\delta|$ and, for all $[\xi] \in \widehat{G}$ and $1 \le m \le d_\xi$, there is no integer $k \in \mathbb{Z}$ satisfying
			\begin{equation}\label{eq:lambda0_resonance}
				\begin{cases}
					\operatorname{Re}(A_0(k + \mu_m(\xi)p_0)) = 0, \\
					4\pi^2 |k + \mu_m(\xi)p_0|^2 = |A_0|^2 - |B_0|^2;
				\end{cases}
			\end{equation}
			\item[(3)] $|\alpha| < |\delta|$ and $s_0 \neq 0$;
			\item[(4)] $|\alpha| < |\delta|$, $s_0 = 0$, for all $[\xi] \in \widehat{G}$ and $1 \le m \le d_\xi$, there is no $k \in \mathbb{Z}$ satisfying \eqref{eq:lambda0_resonance}, and the following Diophantine condition holds:
			
			\medskip\noindent
			\textup{(DC)} There exists $M > 0$ such that for all $[\xi] \in \widehat{G}$ with $\langle\xi\rangle \ge M$,
			\[
			|2\pi k + \mu_m(\xi)p_0 2\pi - q_0\sqrt{\delta^2 - |\alpha|^2}| \ge \langle\xi\rangle^{-M},
			\qquad 1 \le m \le d_\xi.
			\]
		\end{itemize}
		
		Then, for every $f \in C^\infty(\mathbb{T} \times G)$, there exists $u \in C^\infty(\mathbb{T} \times G)$ such that $Pu = f$.
	\end{theorem}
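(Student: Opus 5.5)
The plan is to derive Theorem~\ref{thm:lambda0_cases} from Theorem~\ref{thm:main_varcoef} with $\lambda=0$. In each of the four cases I will check hypotheses (I), (II) and (III). For $\lambda=0$ we have $C_0=2\pi p_0\in\mathbb{R}$ and
\[
\rho_m(\xi)=\rho:=\sqrt{|\alpha|^2-\delta^2},\qquad \operatorname{Re}\rho\ge0,
\]
which does not depend on $([\xi],m)$. The first equation of (II) becomes $2\pi\operatorname{Re}\bigl(A_0(k+\mu_m(\xi)p_0)\bigr)=0$, which equals $2\pi s_0(k+\mu_m(\xi)p_0)=0$. The second becomes $4\pi^2|k+\mu_m(\xi)p_0|^2=|A_0|^2-|B_0|^2$. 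So (II) is exactly the non-existence of $k$ solving \eqref{eq:lambda0_resonance}. Write $\theta=2\pi\mu_m(\xi)p_0$, so the two denominators in (III) are
\[
D_1=\bigl|e^{-\rho q_0}-e^{i\theta+s_0}\bigr|,\qquad D_2=\bigl|1-e^{-\rho q_0+i\theta+s_0}\bigr|.
\]
In cases (1)--(3) the idea is to bound $D_1$ and $D_2$ below by a positive constant independent of $[\xi]$. Such a constant is $\ge\langle\xi\rangle^{-M}$ once $M$ is large.

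\emph{Cases (1) and (2).} First I get $|\alpha|>|\delta|$. In case (1) it follows from $|\alpha|^2q_0^2>s_0^2+\delta^2q_0^2$, and in case (2) it is assumed; so (I) holds and $\rho>0$ is real. In case (1), (II) holds trivially, since its second equation has left side $\ge0$ and right side $<0$. In case (2), (II) is assumed. I then need strict inequality $|B_0|<|A_0|$ in case (2): if $|B_0|=|A_0|$, the trivial representation ($\mu=0$) with $k=0$ solves \eqref{eq:lambda0_resonance}, which is excluded. In both cases the reverse triangle inequality gives
\[
D_1\ge\bigl|e^{-\rho q_0}-e^{s_0}\bigr|,\qquad D_2\ge\bigl|1-e^{s_0-\rho q_0}\bigr|.
\]
These lower bounds are positive unless $\rho q_0=\mp s_0$. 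Since $\rho^2q_0^2=|B_0|^2-\delta^2q_0^2$, this equality would force $|B_0|=|A_0|$. That equality is excluded in case (2) by the previous step and in case (1) by hypothesis.

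\emph{Case (3).} Now (I) holds and $\rho=i\kappa$ with $\kappa=\pm\sqrt{\delta^2-|\alpha|^2}\in\mathbb{R}\setminus\{0\}$. For (II), the first equation together with $s_0\ne0$ forces $k+\mu_m(\xi)p_0=0$. The second then gives $0=s_0^2+(\delta^2-|\alpha|^2)q_0^2>0$, a contradiction. For (III), $|e^{-\rho q_0}|=1$, so $D_1,D_2\ge|1-e^{s_0}|>0$.

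\emph{Case (4).} This is the main obstacle, because here no uniform constant exists. (I) and (II) hold by hypothesis. With $s_0=0$ and $\rho=i\kappa$ we get
\[
D_1=\bigl|1-e^{i(\theta+\kappa q_0)}\bigr|,\qquad D_2=\bigl|1-e^{i(\theta-\kappa q_0)}\bigr|.
\]
I will use $|1-e^{ix}|\ge\frac{2}{\pi}\operatorname{dist}(x,2\pi\mathbb{Z})$ to reduce each of them to $\min_k|2\pi k+\theta\mp q_0\sqrt{\delta^2-|\alpha|^2}|$. (DC) gives the lower bound for one sign directly. For the other sign I pass to the conjugate representation: $\sigma_X(\overline\xi)=\overline{\sigma_X(\xi)}$, so $-\mu_m(\xi)$ is an eigenvalue for $\overline\xi$, and $\langle\overline\xi\rangle=\langle\xi\rangle$. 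Applying (DC) at $\overline\xi$ and replacing $k$ by $-k$ then covers the remaining sign. After shrinking the constant $2/\pi$ by enlarging $M$, this yields (III), and Theorem~\ref{thm:main_varcoef} finishes the proof.
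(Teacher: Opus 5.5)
Your proposal is correct and follows the same route as the paper. Specialize Theorem~\ref{thm:main_varcoef} to $\lambda=0$, where $\rho$ no longer depends on $[\xi]$, and bound the two periodicity denominators below by a uniform constant in cases (1)--(3) and polynomially via (DC) in case (4). Your version is tighter than the paper's sketch: you bound each denominator separately (the paper bounds only their sum, while (III) needs the minimum), and you use the conjugate representation $\overline{\xi}$, with spectrum $\{-\mu_m(\xi)\}$ and $\langle\overline{\xi}\rangle=\langle\xi\rangle$, to control the phase $\theta+\kappa q_0$ that the paper waves through as a ``harmless sign change''.
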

	\begin{proof}
		Let $f \in C^\infty(\mathbb{T} \times G)$ and suppose that 
		$u \in \mathcal{D}'(\mathbb{T} \times G)$ satisfies $Pu=f$. 
		Proceeding as in the proof of Theorem~\ref{thm:main_varcoef}, 
		we take partial Fourier coefficients with respect to $G$ and 
		reduce the equation to a $2\times2$ system of ODEs for each 
		$[\xi] \in \widehat G$ and $1 \le m \le d_\xi$.
		
		When $\lambda=0$, the quantity
		\[
		\rho_m(\xi)=\sqrt{(-i\delta)^2+|\alpha|^2}
		=\sqrt{|\alpha|^2-\delta^2}
		\]
		is independent of $\xi$, and we simply write $\rho$.
		As in the general case, the solution is obtained by variation of constants,
		and periodicity leads to denominators of the form
		\[
		e^{-\rho q_0}-e^{i\mu_m(\xi)p_0 2\pi+s_0}
		\quad\text{and}\quad
		1-e^{-\rho q_0+i\mu_m(\xi)p_0 2\pi+s_0}.
		\]
		
		Thus, global solvability reduces to controlling these quantities.
		
		We first observe that if $|\alpha| \ne |\delta|$, then $\rho \ne 0$,
		so no degeneracy occurs in the diagonalization procedure.
		Moreover, the resonance system excluded in
		\eqref{eq:lambda0_resonance} corresponds precisely to the
		vanishing of the above denominators, as in the proof of
		Theorem~\ref{thm:main_varcoef}.
		
		If $|\alpha|>|\delta|$, then $\rho\in\mathbb R$.
		In this case $e^{-\rho q_0}$ is a positive real number.
		The identity
		\[
		e^{-\rho q_0}=e^{i\mu_m(\xi)p_0 2\pi+s_0+i2\pi k}
		\]
		would imply simultaneously that the imaginary part vanishes
		and that
		\[
		-\rho q_0 = s_0 + i2\pi(\mu_m(\xi)p_0+k),
		\]
		which yields exactly the system \eqref{eq:lambda0_resonance}.
		Hence, under the hypotheses excluding that system,
		the denominators do not vanish.
		Furthermore, if $|B_0|>|A_0|$, then
		$e^{-\rho q_0}\neq e^{s_0}$,
		so the distance between these exponentials is bounded below by
		a strictly positive constant independent of $[\xi]$.
		The same conclusion follows when $|B_0|\le |A_0|$ but
		\eqref{eq:lambda0_resonance} has no integer solution,
		since the trivial representation would otherwise produce a contradiction.
		In both situations we obtain a uniform bound
		\[
		\left|e^{-\rho q_0}-e^{i\mu_m(\xi)p_0 2\pi+s_0}\right|
		+
		\left|1-e^{-\rho q_0+i\mu_m(\xi)p_0 2\pi+s_0}\right|
		\ge C>0,
		\]
		independent of $[\xi]$.
		
		If instead $|\alpha|<|\delta|$, then $\rho=i\omega$ with
		$\omega=\sqrt{\delta^2-|\alpha|^2}\in\mathbb R$,
		so $|e^{-\rho q_0}|=1$.
		If $s_0\neq0$, then
		\[
		|e^{-\rho q_0}-e^{i\mu_m(\xi)p_0 2\pi+s_0}|
		\ge |1-e^{s_0}|,
		\]
		which is strictly positive.
		Hence the denominators are again uniformly bounded away from zero.
		
		The remaining situation occurs when $|\alpha|<|\delta|$ and $s_0=0$.
		In this case the denominators reduce to
		\[
		|e^{i(\mu_m(\xi)p_0 2\pi-q_0\omega)}-1|,
		\]
		up to harmless sign changes.
		They may approach zero only if the phase
		$\mu_m(\xi)p_0 2\pi-q_0\omega$
		approaches an integer multiple of $2\pi$.
		The Diophantine condition (DC) prevents this phenomenon
		by imposing the lower bound
		\[
		|2\pi k+\mu_m(\xi)p_0 2\pi-q_0\omega|
		\ge \langle\xi\rangle^{-M},
		\]
		which is equivalent, by Lemma~\ref{lem:dc_equiv},
		to a polynomial lower bound on
		$|e^{i(\mu_m(\xi)p_0 2\pi-q_0\omega)}-1|$.
		Consequently, the inverses of the denominators grow at most
		polynomially in $\langle\xi\rangle$.
		
		In all cases, therefore, the constants arising from periodicity
		either remain uniformly bounded or have at most polynomial growth.
		Since the forcing terms are rapidly decreasing in $\xi$,
		the same estimates as in Theorem~\ref{thm:main_varcoef}
		show that the Fourier coefficients of $u$
		decay rapidly.
		Hence $u\in C^\infty(\mathbb{T}\times G)$ and $Pu=f$.
	\end{proof}

	\begin{lemma}\label{lem:dc_equiv}
		The Diophantine condition \textup{(DC)} is equivalent to the following:
		\medskip\noindent
		
		\textup{(DC')} There exists $M > 0$ such that
		\[
		[\xi] \in \widehat{G},\ \langle\xi\rangle \ge M
		\ \Longrightarrow\
		\left|e^{i(\mu_m(\xi)p_0 2\pi - q_0\sqrt{\delta^2 - |\alpha|^2})} - 1\right|
		\ge \langle\xi\rangle^{-M},
		\qquad 1 \le m \le d_\xi.
		\]
	\end{lemma}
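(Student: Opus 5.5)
The plan is to reduce both conditions to a statement about a single real phase and compare chord length with arc length on the unit circle. Write $\omega=\sqrt{\delta^2-|\alpha|^2}$ and, for each $[\xi]$ and $m$, set
\[
\theta_m(\xi)=\mu_m(\xi)p_0 2\pi-q_0\omega\in\mathbb{R}.
\]
Let $d_m(\xi)=\min_{k\in\mathbb{Z}}|2\pi k+\theta_m(\xi)|\in[0,\pi]$ be its distance to $2\pi\mathbb{Z}$. Since (DC) is required for every $k\in\mathbb{Z}$, it is equivalent to $d_m(\xi)\ge\langle\xi\rangle^{-M}$. Also $|e^{i\theta}-1|=2|\sin(\theta/2)|$ depends only on $\theta$ modulo $2\pi$. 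So both (DC) and (DC') are polynomial lower bounds for two functions of $d_m(\xi)$.

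The key elementary inequality is the chord--arc comparison
\[
\frac{2}{\pi}\,d\le 2\sin(d/2)\le d,\qquad 0\le d\le\pi,
\]
which is concavity of $\sin$ on $[0,\pi/2]$ (Jordan's inequality) together with $\sin x\le x$. Hence
\[
\tfrac{2}{\pi}\,d_m(\xi)\le|e^{i\theta_m(\xi)}-1|\le d_m(\xi).
\]

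For (DC)$\Rightarrow$(DC'), the left inequality gives $|e^{i\theta_m(\xi)}-1|\ge\frac{2}{\pi}\langle\xi\rangle^{-M}$ for $\langle\xi\rangle\ge M$. To absorb the constant $2/\pi$, replace $M$ by $M'=M+2$. Then $\langle\xi\rangle\ge M'$ forces $\langle\xi\rangle\ge 2$, so $\frac{2}{\pi}\langle\xi\rangle^{-M}\ge\langle\xi\rangle^{-M-2}\ge\langle\xi\rangle^{-M'}$, using $\frac{2}{\pi}\langle\xi\rangle^{2}\ge\frac{8}{\pi}>1$.

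For (DC')$\Rightarrow$(DC), the right inequality gives $d_m(\xi)\ge|e^{i\theta_m(\xi)}-1|\ge\langle\xi\rangle^{-M}$ directly, with the same $M$.

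The only point needing care, and it is minor, is that one parameter $M$ plays two roles: the threshold $\langle\xi\rangle\ge M$ and the exponent. Enlarging $M$ weakens both the threshold and the bound in the right direction, so the constant adjustment above is legitimate. Everything is uniform in $[\xi]$ and $m$ because the comparison constants are absolute.
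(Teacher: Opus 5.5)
Your proof is correct. It rests on the same elementary fact as the paper's: $|e^{i\theta}-1|$ is comparable to the distance from $\theta$ to $2\pi\mathbb{Z}$. What differs is the packaging.

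The paper argues both implications by contraposition. It negates each condition to extract sequences $[\xi_\nu]$ with $\langle\xi_\nu\rangle>\nu$ and writes $|e^{i\theta}-1|^2=2(1-\cos\theta)$. It then uses the one-sided bounds $1-\cos\theta\le|\theta|$ for $|\theta|\le 1$ and $1-\cos\theta\ge\theta^2/4$ for $|\theta|\le 2$, choosing a nearest integer $k_\nu$ in the converse direction. These cruder bounds cost a factor in the exponent: the paper ends up with bounds like $\langle\xi_\nu\rangle^{-\nu/2}$. That loss is harmless only because $\nu\to\infty$ in the contrapositive.

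You instead observe that (DC), being quantified over all $k\in\mathbb{Z}$, is just the lower bound $d_m(\xi)\ge\langle\xi\rangle^{-M}$. You then prove both implications directly from the sharp two-sided chord--arc estimate $\frac{2}{\pi}d_m(\xi)\le|e^{i\theta_m(\xi)}-1|\le d_m(\xi)$. This removes the sequence and nearest-integer bookkeeping. It also produces explicit constants: the same $M$ for (DC')$\Rightarrow$(DC), and $M+2$ for (DC)$\Rightarrow$(DC').

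The paper's contrapositive formulation matches how such Diophantine conditions are usually exploited, through sequences of ``bad'' representations. For this lemma, though, your direct argument is shorter and more quantitative.
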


	\begin{proof}
		Suppose first that \textup{(DC)} fails. Then, for every $\nu\in\mathbb{N}$ there exist $[\xi_\nu]\in\widehat G$, $m_\nu$ and $k_\nu\in\mathbb{Z}$ with $\langle\xi_\nu\rangle>\nu$ such that
		\[
		|2\pi k_\nu+\mu_{m_\nu}(\xi_\nu)p_0 2\pi-q_0\omega|
		< \langle\xi_\nu\rangle^{-\nu}.
		\]
		
		Using that \(|e^{i\theta}-1|^2=2(1-\cos\theta),\) and the inequality $1-\cos\theta\le |\theta|$ for $|\theta|\le1$, we obtain
		\[
		|e^{i(\mu_{m_\nu}(\xi_\nu)p_0 2\pi-q_0\omega)}-1|^2
		< 2\langle\xi_\nu\rangle^{-\nu},
		\]
		which shows that \textup{(DC')} fails.
		
		\medskip
		
		Conversely, suppose that \textup{(DC')} fails. Then there exist $[\xi_\nu]$ and $m_\nu$ such that
		\[
		|e^{i(\mu_{m_\nu}(\xi_\nu)p_0 2\pi-q_0\omega)}-1|
		< \langle\xi_\nu\rangle^{-\nu}.
		\]
		
		Let $k_\nu$ be the integer closest to
		\[
		\mu_{m_\nu}(\xi_\nu)p_0-\frac{q_0\omega}{2\pi}.
		\]
		
		Since $|e^{i\theta}-1|\to0$ implies that $\theta$ approaches an integer multiple of $2\pi$, we obtain
		\[
		|2\pi k_\nu+\mu_{m_\nu}(\xi_\nu)p_0 2\pi-q_0\omega|
		< C\langle\xi_\nu\rangle^{-\nu/2},
		\]
		for $\nu$ sufficiently large, using the inequality
		\(
		1-\cos\theta\ge \theta^2/4,\) for  \(|\theta|\le2
		\), thus \textup{(DC)} fails.
		
		Therefore, \textup{(DC)} and \textup{(DC')} are equivalent.
	\end{proof}	
	
%==================================================
%==================================================
	\section{Example: the case $G=\mathbb{S}^3$}\label{sec:example}
%==================================================
%==================================================
	
	We illustrate the general conditions of Theorem~\ref{thm:main_varcoef} in the case $G=\mathbb{S}^3$. Let $X$ be a normalized left-invariant vector field on $\mathbb{S}^3$. Using the identification $\mathbb{S}^3\simeq \mathrm{SU}(2)$ and composing with an inner automorphism, we may assume without loss of generality that $iX$ coincides with the standard left-invariant vector field $\partial_0$ in the usual Fourier analysis on $\mathbb{S}^3$. With this choice, the symbol of $X$ is diagonal and given by
	\[
	\sigma_X(\ell)_{mn}= i m\,\delta_{mn},\qquad \ell\in \tfrac12\mathbb{N}_0,
	\]
	where $m,n\in\{-\ell,-\ell+1,\dots,\ell\}$. In particular, the eigenvalues of $\sigma_{iX}(\ell)$ are precisely $m\in\{-\ell,\dots,\ell\}$.
	
	Consider the operator $P:C^\infty(\mathbb{T}\times \mathbb{S}^3)\to C^\infty(\mathbb{T}\times \mathbb{S}^3)$ defined by \medskip
	\begin{equation}\label{eq:P_cvS3}
		Pu=\partial_tu+i(p_0+i\lambda q(t))\partial_0 u-(s(t)+i\delta q(t))u-\alpha q(t)\overline u, \medskip
	\end{equation}
	where $q,s\in C^\infty(\mathbb{T};\mathbb{R})$, $\lambda,\delta\in\mathbb{R}$ and $\alpha\in\mathbb{C}$ satisfy the hypotheses of Theorem~\ref{thm:main_varcoef}. We denote
	\begin{align*}
		s_0 &=\int_0^{2\pi}s(\tau)\,\mathrm{d}\tau,\qquad
		q_0=\int_0^{2\pi}q(\tau)\,\mathrm{d}\tau, \\[3mm]
		A_0 &=s_0+i\delta q_0,\quad B_0=\alpha q_0,\quad C_0=2\pi p_0+i\lambda q_0,
		\end{align*}
	and, for each $\ell\in \tfrac12\mathbb{N}_0$ and $m\in\{-\ell,\dots,\ell\}$, we set
	\[
	\rho_m(\ell)=\sqrt{(\lambda m-i\delta)^2+|\alpha|^2},
	\qquad \text{with }\operatorname{Re}\rho_m(\ell)\ge 0.
	\]
	
	\begin{theorem}\label{thm:S3_general}
		Suppose that $P$ in \eqref{eq:P_cvS3} satisfies:
		\begin{itemize}
			\item[(I)] $|\alpha|\neq |\delta|$;
			\item[(II)] there is no $(m,k)\in\tfrac12\mathbb{Z}\times\mathbb{Z}$ such that
			\[
			\begin{cases}
				\operatorname{Re}\bigl(A_0(2\pi k+m\overline{C_0})\bigr)=0,\\[2mm]
				|2\pi k+m C_0|^2=|A_0|^2-|B_0|^2;
			\end{cases}
			\]
			\item[(III)] there exists $M>0$ such that, for all $\ell\in\tfrac12\mathbb{N}_0$ with $\langle\ell\rangle\ge M$,
			\[
			\min\Bigl\{\bigl|e^{-\rho_m(\ell) q_0}-e^{im p_0 2\pi+s_0}\bigr|,
			\ \bigl|1-e^{-\rho_m(\ell) q_0+im p_0 2\pi+s_0}\bigr|\Bigr\}\ge \langle\ell\rangle^{-M},
			\]
			for every $m\in\{-\ell,\dots,\ell\}$.
		\end{itemize}
		
		Then, for every $f\in C^\infty(\mathbb{T}\times \mathbb{S}^3)$, there exists $u\in C^\infty(\mathbb{T}\times \mathbb{S}^3)$ such that $Pu=f$.
	\end{theorem}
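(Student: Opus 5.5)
Theorem~\ref{thm:S3_general} is a direct specialization of Theorem~\ref{thm:main_varcoef} to $G=\mathbb{S}^3$, so the plan is to check that $P$ in \eqref{eq:P_cvS3} is an instance of \eqref{eq:P_cv} and that hypotheses (I)--(III) of Theorem~\ref{thm:S3_general} imply the corresponding hypotheses of Theorem~\ref{thm:main_varcoef}.

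First I will identify the operator. Since $iX=\partial_0$, we have $X=-i\partial_0$. Hence $-(p_0+i\lambda q(t))X = i(p_0+i\lambda q(t))\partial_0$, so \eqref{eq:P_cvS3} coincides with \eqref{eq:P_cv} for this normalized $X$. Next I recall the standard description of $\widehat{\mathbb{S}^3}$: it is indexed by $\ell\in\tfrac12\mathbb{N}_0$, with $d_\ell=2\ell+1$, and the representatives $t^\ell$ can be chosen so that $\sigma_{\partial_0}(\ell)=\operatorname{diag}(m)_{m=-\ell}^{\ell}$. Thus $\sigma_X(\ell)_{mn}=im\,\delta_{mn}$ and $\mu_m(\xi)=m$, which is exactly the diagonal form fixed in Section~\ref{sec:normalform}. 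The Laplacian eigenvalue is $\nu_\ell=\ell(\ell+1)$, so $\langle\xi\rangle=\langle\ell\rangle=(1+\ell(\ell+1))^{1/2}$. Both facts are standard in the Fourier analysis on $\mathrm{SU}(2)$ (see \cite{livropseudo,KMR20}).

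With $\mu_m(\xi)=m$, hypothesis (I) is literally the same in both theorems. Hypothesis (III) of Theorem~\ref{thm:main_varcoef} becomes exactly (III) here, since $\rho_m(\xi)=\rho_m(\ell)$ and the weights $\langle\xi\rangle=\langle\ell\rangle$ agree. For (II), the set of pairs $(\mu_m(\xi),k)$ arising in Theorem~\ref{thm:main_varcoef} is $\{(m,k): m\in\{-\ell,\dots,\ell\},\ \ell\in\tfrac12\mathbb{N}_0,\ k\in\mathbb{Z}\}$. The values of $m$ that occur are contained in $\tfrac12\mathbb{Z}$ (in fact they exhaust it). Therefore the absence of solutions $(m,k)\in\tfrac12\mathbb{Z}\times\mathbb{Z}$ of the system in (II) implies the absence of solutions for every $[\xi]$ and every $m$.

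Theorem~\ref{thm:main_varcoef} then yields, for every $f\in C^\infty(\mathbb{T}\times\mathbb{S}^3)$, a solution $u\in C^\infty(\mathbb{T}\times\mathbb{S}^3)$ of $Pu=f$.

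The only point needing care is the reduction to $iX=\partial_0$ and the diagonal symbol. I plan to justify it via the inner-automorphism remark already made in the text: any normalized $X$ is $\operatorname{Ad}$-conjugate to the standard generator, and conjugating by a group automorphism preserves smoothness and solvability. I also need the compatibility of the conjugate representation with the $\mu_m$ used in the coupled system, namely $\sigma_X(\overline{\xi})=\overline{\sigma_X(\xi)}$. On $\mathrm{SU}(2)$ this holds because $\overline{t^\ell}$ is equivalent to $t^\ell$ via a permutation $m\mapsto -m$ with signs. I expect this bookkeeping to be the main (mild) obstacle. Since Theorem~\ref{thm:main_varcoef} already assumes these facts for general $G$, it suffices to invoke them.
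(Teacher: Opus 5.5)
Your proposal is correct and follows the paper's argument: Theorem~\ref{thm:S3_general} is obtained by specializing Theorem~\ref{thm:main_varcoef} with $\mu_m(\xi)=m\in\{-\ell,\dots,\ell\}$, and since these values exhaust $\tfrac12\mathbb{Z}$, hypotheses (I)--(III) transfer directly. One small slip: $X=-i\partial_0$ together with $\sigma_{\partial_0}(\ell)=\operatorname{diag}(m)$ gives $\sigma_X(\ell)=\operatorname{diag}(-im)$, not $\operatorname{diag}(im)$, but this is harmless because $\{-\ell,\dots,\ell\}$ is symmetric and the conditions are imposed for every $m$.
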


	This is a direct specialization of Theorem~\ref{thm:main_varcoef}, and the only change is that the spectral parameter $\mu_m(\xi)$ becomes the scalar $m$, and the representations are indexed by $\ell\in\tfrac12\mathbb{N}_0$ with $m\in\{-\ell,\dots,\ell\}$. 
	
	\medskip
	We now consider the special case $\lambda=0$. In this regime, \eqref{eq:P_cvS3} becomes
	\begin{equation}\label{eq:P_varcoef2S3}
		Pu=\partial_t u+ip_0\partial_0 u-(s(t)+i\delta q(t))u-\alpha q(t)\overline u,
	\end{equation}
	with $p_0\in\mathbb{R}$, $q,s\in C^\infty(\mathbb{T};\mathbb{R})$, $q\not\equiv 0$, $q\ge 0$, $\delta\in\mathbb{R}$, and $\alpha\in\mathbb{C}\setminus\{0\}$.
	
	\begin{theorem}\label{thm:S3_lambda0}
		Suppose that the operator $P$ in \eqref{eq:P_varcoef2S3} satisfies one of the following conditions:
		\begin{itemize}
			\item[(1)] $|B_0|>|A_0|$;
			\item[(2)] $|B_0|\le |A_0|$, $|\alpha|>|\delta|$, and there is no $(m,k)\in \tfrac12\mathbb{Z}\times\mathbb{Z}$ satisfying
			\begin{equation}\label{eq:cv2_caso2S3}
				\begin{cases}
					\operatorname{Re}\bigl(A_0(k+m p_0)\bigr)=0,\\[3mm]
					4\pi^2|k+m p_0|^2=|A_0|^2-|B_0|^2;
				\end{cases}
			\end{equation}
			\item[(3)] $|\alpha|<|\delta|$ and $s_0\neq 0$;
			\item[(4)] $|\alpha|<|\delta|$, $s_0=0$, there is no $(m,k)\in\tfrac12\mathbb{Z}\times\mathbb{Z}$ satisfying \eqref{eq:cv2_caso2S3}, and the following Diophantine condition holds:
			
			\medskip\noindent
			\textup{(DC$_{\mathbb{S}^3}$)} There exists $M>0$ such that, for all $\ell\in\tfrac12\mathbb{N}_0$ with $\langle\ell\rangle\ge M$, we have
			\[
			|2\pi k + m p_0 2\pi-q_0\sqrt{\delta^2-|\alpha|^2}|\ge \langle\ell\rangle^{-M},
			\qquad \forall\, m\in\{-\ell,\dots,\ell\}.
			\]
		\end{itemize}
		
		Then, for every $f\in C^\infty(\mathbb{T}\times \mathbb{S}^3)$, there exists $u\in C^\infty(\mathbb{T}\times \mathbb{S}^3)$ such that $Pu=f$.
	\end{theorem}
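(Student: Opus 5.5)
Theorem~\ref{thm:S3_lambda0} is the specialization of Theorem~\ref{thm:lambda0_cases} to $G=\mathbb{S}^3\simeq\mathrm{SU}(2)$, so the plan is to check that each hypothesis (1)--(4) here implies the corresponding hypothesis there; Theorem~\ref{thm:lambda0_cases} then gives the conclusion directly. With the normalization fixed at the start of Section~\ref{sec:example}, $iX=\partial_0$, so $X=-i\partial_0$ and $-p_0Xu=ip_0\partial_0u$. Hence \eqref{eq:P_varcoef2S3} is exactly \eqref{eq:P_lambda0} with this choice of $X$. The unitary dual is indexed by $\ell\in\frac12\mathbb{N}_0$, and the representative $t^\ell$ satisfies $\sigma_X(\ell)=\operatorname{diag}(im)_{m=-\ell}^{\ell}$. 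Therefore $\mu_m(\xi)$ ranges over $\{-\ell,\dots,\ell\}$, and $\langle\xi\rangle=\langle\ell\rangle$ with $\nu_{[\xi]}=\ell(\ell+1)$, up to the normalization constant of the Laplacian.

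Conditions (1) and (3) involve only $A_0,B_0,\alpha,\delta,s_0$, so they transfer verbatim. For (2) and (4), I will check that the absence of $(m,k)\in\frac12\mathbb{Z}\times\mathbb{Z}$ solving \eqref{eq:cv2_caso2S3} implies the absence, for every $[\xi]$ and every index, of $k$ solving \eqref{eq:lambda0_resonance}. This holds because every spectral value $\mu_m(\xi)=m$ lies in $\frac12\mathbb{Z}$. The set $\frac12\mathbb{Z}$ is slightly larger than the set of values actually attained, but that only makes the hypothesis here stronger, which is the harmless direction. Likewise (DC$_{\mathbb{S}^3}$) is literally (DC), written with $\mu_m(\xi)=m$ and $\langle\xi\rangle=\langle\ell\rangle$.

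The only point needing care is that the weight $\langle\ell\rangle$ must be comparable to $\langle\xi\rangle$, so that polynomial bounds in one are polynomial bounds in the other, possibly after enlarging $M$. Since $\nu_{[\xi]}$ is quadratic in $\ell$, one has $\langle\xi\rangle\asymp 1+\ell\asymp\langle\ell\rangle$, and the constants are absorbed by increasing $M$. I will also note that $\alpha\neq0$ and $q\ge0$, $q\not\equiv0$ are assumed, so the standing hypotheses of Theorem~\ref{thm:lambda0_cases} hold. With these identifications, each case of the theorem follows from the corresponding case of Theorem~\ref{thm:lambda0_cases}. I expect no genuine obstacle beyond keeping the sign of $X$ versus $\partial_0$ and the indexing consistent.
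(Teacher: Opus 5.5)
Your proposal is correct and matches the paper's route: the paper gives no separate argument for Theorem~\ref{thm:S3_lambda0}, treating it like Theorem~\ref{thm:S3_general} as a direct specialization (here of Theorem~\ref{thm:lambda0_cases}) with $\mu_m(\xi)=m\in\{-\ell,\dots,\ell\}$ and $\langle\xi\rangle=\langle\ell\rangle$. One small correction, which does not affect your argument: the weights actually attained exhaust $\tfrac12\mathbb{Z}$, since every $m\in\tfrac12\mathbb{Z}$ is a weight of the representation with $\ell=|m|$, so the nonresonance condition over $\tfrac12\mathbb{Z}\times\mathbb{Z}$ is equivalent to the one in Theorem~\ref{thm:lambda0_cases}, not strictly stronger.
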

	
	\begin{lemma}\label{lem:dcS3_equiv}
		The Diophantine condition \textup{(DC$_{\mathbb{S}^3}$)} is equivalent to:
		\medskip\noindent
		
		\textup{(DC$'_{\mathbb{S}^3}$)} There exists $M>0$ such that, for all $\ell\in\tfrac12\mathbb{N}_0$ with $\langle\ell\rangle\ge M$, we have
		\[
		\bigl|e^{\,i(mp_0 2\pi-q_0\sqrt{\delta^2-|\alpha|^2})}-1\bigr|\ge \langle\ell\rangle^{-M},
		\qquad \forall\, m\in\{-\ell,\dots,\ell\}.
		\]
	\end{lemma}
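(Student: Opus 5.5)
Lemma~\ref{lem:dcS3_equiv} is the special case $G=\mathbb{S}^3$ of Lemma~\ref{lem:dc_equiv}, with $\mu_m(\xi)=m$ and the representations indexed by $\ell$. I would nevertheless write out a direct argument. The reason is that the constant $M$ is not preserved exactly: one direction costs a square root in the exponent, so $M$ must be adjusted. Write $\omega=\sqrt{\delta^2-|\alpha|^2}$ and $\theta_m=2\pi m p_0-q_0\omega$. Both conditions concern the same quantity $\operatorname{dist}(\theta_m,2\pi\mathbb{Z})=\min_{k\in\mathbb{Z}}|2\pi k+\theta_m|$. (I read (DC$_{\mathbb{S}^3}$) as holding for all $k\in\mathbb{Z}$.) The key elementary fact is the two-sided comparison
\[
\frac{2}{\pi}\,\operatorname{dist}(\theta,2\pi\mathbb{Z})\le |e^{i\theta}-1| = 2\bigl|\sin(\theta/2)\bigr|\le \operatorname{dist}(\theta,2\pi\mathbb{Z}).
\]
It follows from $|e^{i\theta}-1|=2|\sin(\theta/2)|$, from periodicity (so we may take $\theta$ to be the representative in $[-\pi,\pi]$), and from Jordan's inequality $\frac{2}{\pi}|x|\le|\sin x|\le|x|$ for $|x|\le\pi/2$.

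\emph{(DC$'_{\mathbb{S}^3}$)$\Rightarrow$(DC$_{\mathbb{S}^3}$).} For every $k$ and every $m\in\{-\ell,\dots,\ell\}$,
\[
|2\pi k+\theta_m|\ge \operatorname{dist}(\theta_m,2\pi\mathbb{Z})\ge |e^{i\theta_m}-1|\ge \langle\ell\rangle^{-M}.
\]
So (DC$_{\mathbb{S}^3}$) holds with the same $M$.

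\emph{(DC$_{\mathbb{S}^3}$)$\Rightarrow$(DC$'_{\mathbb{S}^3}$).} Take $k$ realizing the distance. Then
\[
|e^{i\theta_m}-1|\ge \tfrac{2}{\pi}\,|2\pi k+\theta_m|\ge \tfrac{2}{\pi}\langle\ell\rangle^{-M}.
\]
The factor $2/\pi$ is absorbed by passing to a larger exponent $M'=M+1$, which works once $\langle\ell\rangle\ge \pi/2$. Since $\langle\ell\rangle\ge1$ always, and in particular for $\langle\ell\rangle\ge M'$ once $M'\ge\pi/2$, this gives (DC$'_{\mathbb{S}^3}$) with exponent and threshold $M'=\max\{M+1,2\}$.

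The main point needing care is the bookkeeping of the single constant $M$, which serves both as the threshold on $\langle\ell\rangle$ and as the exponent. Enlarging $M$ weakens the required lower bound and raises the threshold, so both conditions are monotone in $M$ in the right direction. This justifies the replacement of $M$ by $M'$ above. Alternatively, one could invoke Lemma~\ref{lem:dc_equiv} directly, specialized to $\mathbb{S}^3$. I prefer the explicit sine comparison because it avoids the $\nu/2$ loss in that proof's contrapositive argument.
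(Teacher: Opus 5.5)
Your proof is correct, but it takes a different route from the paper. The paper proves this lemma only by reference to Lemma~\ref{lem:dc_equiv}, and that proof argues by contraposition. Assuming one condition fails, it extracts sequences $[\xi_\nu]$, $m_\nu$, $k_\nu$ and transfers the smallness through the local bounds $1-\cos\theta\le|\theta|$ and $1-\cos\theta\ge\theta^2/4$. These hold only for small $|\theta|$, which is why the paper needs the extra step of taking $k_\nu$ as the nearest integer. The cost is that the exponent $\nu$ becomes $\nu/2$, which is harmless because the exponent is arbitrary.

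You instead prove the global two-sided comparison $\frac{2}{\pi}\operatorname{dist}(\theta,2\pi\mathbb{Z})\le|e^{i\theta}-1|\le\operatorname{dist}(\theta,2\pi\mathbb{Z})$ from Jordan's inequality, and then argue directly. This gives explicit constants: (DC$'_{\mathbb{S}^3}$) implies (DC$_{\mathbb{S}^3}$) with the same $M$, and the converse holds with $M'=\max\{M+1,2\}$. Your bookkeeping of the threshold and of the monotonicity in $M$ is right. Your route also shows there is no loss in the exponent at all, only a multiplicative factor $2/\pi$. So your opening remark that one direction ``costs a square root in the exponent'' describes the paper's argument, not yours.

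Your argument carries over verbatim to Lemma~\ref{lem:dc_equiv}, with $\mu_m(\xi)$ in place of $m$. Like the paper, you tacitly take $\sqrt{\delta^2-|\alpha|^2}$ to be real, so that $\theta_m$ is real. This is the regime $|\alpha|<|\delta|$ in which the condition is actually used.
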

	
	The proof follows from the same ideas used in Lemma~\ref{lem:dc_equiv}.
	
	%==================================================
	%==================================================
	\section{Product of groups}\label{sec:product}
	%==================================================
	%==================================================
	
	In this section we extend Theorem~\ref{thm:main_varcoef} to the case where the group variable is a finite product of compact Lie groups. The proof follows the same Fourier reduction as before.
	
	Let $G_1,\dots,G_n$ be compact Lie groups with Lie algebras $\mathfrak{g}_1,\dots,\mathfrak{g}_n$, and let $X_j\in\mathfrak{g}_j$ be normalized left-invariant vector fields, $j=1,\dots,n$. For each $j$ we choose representatives $\xi_j\in\widehat{G}_j$ such that the symbols $\sigma_{X_j}(\xi_j)$ are diagonal:
	\[
	\sigma_{X_j}(\xi_j)_{m_j n_j}= i\,\mu_{m_j}(\xi_j)\,\delta_{m_j n_j},
	\qquad 1\le m_j,n_j\le d_{\xi_j},
	\]
	with $\mu_{m_j}(\xi_j)\in\mathbb{R}$.
	
	Let $G=G_1\times\cdots\times G_n$. We consider the evolution operator
	\[
	Lu=\partial_t u-\sum_{j=1}^n (p_j(t)+i\lambda_j q(t))\,X_j u,
	\]
	where $p_j,q\in C^\infty(\mathbb{T};\mathbb{R})$, $q\not\equiv 0$, $q\ge 0$, and $\boldsymbol{\lambda}=(\lambda_1,\dots,\lambda_n)\in\mathbb{R}^n$.
	Set
	\[
	p_{0j}=\frac{1}{2\pi}\int_0^{2\pi}p_j(t)\,\mathrm{d}t,\quad j=1,\dots,n,
	\qquad
	p_0=(p_{01},\dots,p_{0n})\in\mathbb{R}^n,
	\]
	and reduce as before to
	\[
	L_0=\partial_t-\sum_{j=1}^n (p_{0j}+i\lambda_j q(t))\,X_j.
	\]
	
	Let $P:C^\infty(\mathbb{T}\times G)\to C^\infty(\mathbb{T}\times G)$ be defined by
	\begin{equation}\label{eq:P_cvGn}
		Pu=L_0 u-(s(t)+i\delta q(t))u-\alpha q(t)\overline u,
	\end{equation}
	where $s\in C^\infty(\mathbb{T};\mathbb{R})$, $\delta\in\mathbb{R}$ and $\alpha\in\mathbb{C}\setminus\{0\}$.
	
	We denote
	\[
	s_0=\int_0^{2\pi}s(\tau)\,\mathrm{d}\tau,\quad
	q_0=\int_0^{2\pi}q(\tau)\,\mathrm{d}\tau,
	\quad
	A_0=s_0+i\delta q_0,\quad B_0=\alpha q_0,\quad C_0=2\pi p_0+i\boldsymbol{\lambda}q_0,
	\]
	where $\boldsymbol{\lambda}q_0=(\lambda_1 q_0,\dots,\lambda_n q_0)\in\mathbb{R}^n$.
	
	Given $[\xi]=([\xi_1],\dots,[\xi_n])\in\widehat{G}_1\times\cdots\times\widehat{G}_n\simeq\widehat{G}$ and indices $m=(m_1,\dots,m_n)$, we set
	\[
	\mu_m(\xi)=(\mu_{m_1}(\xi_1),\dots,\mu_{m_n}(\xi_n))\in\mathbb{R}^n,
	\qquad
	\boldsymbol{\lambda}\cdot \mu_m(\xi)=\sum_{j=1}^n \lambda_j \mu_{m_j}(\xi_j).
	\]
	
	For each $([\xi],m)$, define $\rho_m(\xi)$ as the element of
	\[
	\left\{\pm\sqrt{(\boldsymbol{\lambda}\cdot\mu_m(\xi)-i\delta)^2+|\alpha|^2}\right\}
	\]
	with non-negative real part.
	
	\begin{theorem}\label{thm:product_groups}
		Suppose that $P$ in \eqref{eq:P_cvGn} satisfies:
		\begin{itemize}
			\item[(I)] $|\alpha|\neq |\delta|$;
			\item[(II)] for all $[\xi]\in\widehat G$ and all admissible indices $m$, there is no $k\in\mathbb{Z}$ satisfying
			\[
			\begin{cases}
				\operatorname{Re}\bigl(A_0(2\pi k+\mu_m(\xi)\cdot \overline{C_0})\bigr)=0,\\[2mm]
				|2\pi k+\mu_m(\xi)\cdot C_0|^2=|A_0|^2-|B_0|^2;
			\end{cases}
			\]
			\item[(III)] there exists $M>0$ such that, for all $[\xi]\in\widehat G$ with $\langle\xi\rangle\ge M$,
			\[
			\min\Bigl\{\bigl|e^{-\rho_m(\xi) q_0}-e^{i\mu_m(\xi)\cdot p_0 2\pi+s_0}\bigr|,
			\ \bigl|1-e^{-\rho_m(\xi) q_0+i\mu_m(\xi)\cdot p_0 2\pi+s_0}\bigr|\Bigr\}
			\ge \langle\xi\rangle^{-M},
			\]
			for all admissible indices $m$.
		\end{itemize}
		
		Then, for every $f\in C^\infty(\mathbb{T}\times G)$ there exists $u\in C^\infty(\mathbb{T}\times G)$ such that $Pu=f$.
	\end{theorem}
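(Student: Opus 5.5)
The plan is to reduce Theorem~\ref{thm:product_groups} to the argument already given for Theorem~\ref{thm:main_varcoef}. Fourier analysis on $G=G_1\times\cdots\times G_n$ is the tensor product of the Fourier analyses on the factors. Every $[\xi]\in\widehat G$ is of the form $\xi=\xi_1\otimes\cdots\otimes\xi_n$ with $d_\xi=\prod_j d_{\xi_j}$, and its matrix coefficients are indexed by multi-indices $m=(m_1,\dots,m_n)$, $n'=(n_1,\dots,n_n)$. Each $X_j$ acts only on the $j$-th factor, so
\[
\sigma_{X_j}(\xi)=I\otimes\cdots\otimes\sigma_{X_j}(\xi_j)\otimes\cdots\otimes I .
\]
With the chosen diagonal representatives, the operator $\sum_j(p_{0j}+i\lambda_jq(t))X_j$ therefore has a diagonal symbol. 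Its $(m,m)$ entry is $i\,\mu_m(\xi)\cdot p_0+i\cdot i q(t)\,\boldsymbol\lambda\cdot\mu_m(\xi)$. I would also check that the Laplacian on $G$ is $\sum_j\mathcal L_{G_j}$, so that $\langle\xi\rangle^2=1+\sum_j\nu_{[\xi_j]}$. This gives $|\mu_{m_j}(\xi_j)|\le\langle\xi_j\rangle\le\langle\xi\rangle$, and hence $|\mu_m(\xi)|\lesssim\langle\xi\rangle$ and $|\boldsymbol\lambda\cdot\mu_m(\xi)|\lesssim\langle\xi\rangle$. Theorem~\ref{thm:Cinf_char_rewrite} applies verbatim to the compact Lie group $G$.

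Next I would repeat the proof of Theorem~\ref{thm:main_varcoef} line by line, replacing the scalars $p_0\mu_m(\xi)$ and $\lambda\mu_m(\xi)$ by $p_0\cdot\mu_m(\xi)$ and $\boldsymbol\lambda\cdot\mu_m(\xi)$. The conjugation identity $\widehat{\overline u}(t,\xi)=\overline{\widehat u(t,\overline\xi)}$ still holds, with $\overline\xi=\overline{\xi_1}\otimes\cdots\otimes\overline{\xi_n}$, and $\sigma_{X_j}(\overline{\xi_j})=\overline{\sigma_{X_j}(\xi_j)}$. So for each $([\xi],m,n')$ we obtain the same $2\times2$ system, in which
\[
\widetilde M(\xi)_m=\begin{bmatrix}-\boldsymbol\lambda\cdot\mu_m+i\delta&\alpha\\ \overline\alpha&\boldsymbol\lambda\cdot\mu_m-i\delta\end{bmatrix}
\]
has eigenvalues $\pm\rho_m(\xi)$. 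The twisted boundary condition now carries the factor $e^{i\mu_m(\xi)\cdot p_02\pi+s_0}$. Condition (I) again gives $\rho_m\neq0$, and hence diagonalizability. The resonance computation is unchanged once we note
\[
\operatorname{Re}\bigl(A_0(2\pi k+\mu_m\cdot\overline{C_0})\bigr)=2\pi s_0(\mu_m\cdot p_0+k)+q_0^2\delta\,\boldsymbol\lambda\cdot\mu_m
\]
and
\[
|2\pi k+\mu_m\cdot C_0|^2=(2\pi k+2\pi\mu_m\cdot p_0)^2+q_0^2(\boldsymbol\lambda\cdot\mu_m)^2 .
\]
With these, condition (II) excludes zeros of both denominators. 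The variation-of-constants formulas for $z_1,z_2$ and the constants $K_1,K_2$ are then the same as before.

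The estimates are where the one genuine point arises, and I expect this step to be the main obstacle. We need polynomial control in $\langle\xi\rangle$ of $|\rho_m(\xi)|$, of $|\rho_m(\xi)|^{-1}$, and of the entries of $T(\xi)_m^{-1}$ (which contain $1/(2\alpha\rho_m)$).
\begin{itemize}
\item The upper bound $|\rho_m|\lesssim\langle\xi\rangle$ follows from $|\boldsymbol\lambda\cdot\mu_m|\lesssim\langle\xi\rangle$.
\item For the lower bound I would argue directly from
\[
\rho_m^2=(\boldsymbol\lambda\cdot\mu_m)^2-\delta^2+|\alpha|^2-2i\delta\,\boldsymbol\lambda\cdot\mu_m .
\]
If $|\boldsymbol\lambda\cdot\mu_m|$ is large, the real part dominates. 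Otherwise $\boldsymbol\lambda\cdot\mu_m$ lies in a compact set on which $\rho_m^2$ is continuous and nonvanishing by (I): if $\delta\neq0$, vanishing forces $\boldsymbol\lambda\cdot\mu_m=0$ and then $|\alpha|=|\delta|$; if $\delta=0$, $\rho_m^2\ge|\alpha|^2>0$. This gives $|\rho_m|\ge c>0$ uniformly.
\end{itemize}
With these bounds, and $\operatorname{Re}\rho_m\ge0$ together with the monotonicity of $Q$ and $\widetilde Q$, the Fa\`a di Bruno bounds (Proposition~\ref{prop:faa_di_bruno}) for the integrating factors give growth $\lesssim\langle\xi\rangle^r$. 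Condition (III) bounds $|K_1|$ and $|K_2|$ by $\langle\xi\rangle^M$ times a rapidly decaying quantity, for $\langle\xi\rangle\ge M$. The finitely many $[\xi]$ with $\langle\xi\rangle<M$ are harmless by (II): there are only finitely many such classes, since $\nu_{[\xi]}\to\infty$, so their denominators are bounded below by a positive constant. Hence $\partial_t^\beta\widehat u(t,\xi)_{mn'}$ decays rapidly, and Theorem~\ref{thm:Cinf_char_rewrite} gives $u\in C^\infty(\mathbb T\times G)$ with $Pu=f$.
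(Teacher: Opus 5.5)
Your proposal is correct and follows the paper's route. The paper says only that the proof of Theorem~\ref{thm:product_groups} is the same Fourier reduction as for Theorem~\ref{thm:main_varcoef}, with $\mu_m(\xi)p_0$ and $\lambda\mu_m(\xi)$ replaced by $\mu_m(\xi)\cdot p_0$ and $\boldsymbol{\lambda}\cdot\mu_m(\xi)$, and you carry this out while supplying, correctly, the details it leaves implicit: the tensor-product structure of $\widehat G$ and $\sigma_{X_j}$, the bound $|\mu_{m_j}(\xi_j)|\le\langle\xi\rangle$ via $\mathcal{L}_G=\sum_j\mathcal{L}_{G_j}$, the uniform lower bound on $|\rho_m(\xi)|$, and the use of (II) for the finitely many classes with $\langle\xi\rangle<M$.
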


	\bibliographystyle{plain}
	
	\bibliography{bibliografia}

\end{document}